\newtheorem*{hyp*}{Hypothesis \((\ast )\)}
\newtheorem{thm}{Theorem}[section]
\newtheorem{lem}[thm]{Lemma}
\newcommand{\irr}[1]{\text{Irr}(#1)}
\newcommand{\cod}[1]{\text{cod}(#1)}
\newcommand{\cd}[1]{\text{cd}(#1)}
\renewcommand{\c}[1]{{c}(#1)}
\renewcommand{\ker}[1]{\text{ker}(#1)}
\begin{document}

\title{\(p\)-groups with exactly four codegrees} 

\author{Sarah Croome}
\address{%
Department of Mathematical Sciences\\
Kent State University\\
Kent, OH 44242}

\email{scroome@kent.edu}

\author{Mark L. Lewis}
\address{Department of Mathematical Sciences\\
Kent State University\\
Kent, OH 44242}
\email{lewis@math.kent.edu}

\subjclass[2010]{ 20C15;  20D15}
\keywords{codegrees, characters, \(p\)-groups} 

\begin{abstract} Let \(G\) be a \(p\)-group and let \(\chi\) be an irreducible character of \(G\). The codegree of \(\chi\) is given by \(|G:\text{ker}(\chi)|/\chi(1)\). Du and Lewis have shown that a \(p\)-group with exactly three codegrees has nilpotence class at most 2. Here we investigate \(p\)-groups with exactly four codegrees. If, in addition to having exactly four codegrees, \(G\) has two irreducible character degrees, \(G\) has largest irreducible character degree \(p^2\), \(|G:G'|=p^2\), or \(G\) has coclass at most 3, then \(G\) has nilpotence class at most 4. In the case of coclass at most 3, the order of \(G\) is bounded by \(p^7\). With an additional hypothesis we can extend this result to \(p\)-groups with four codegrees and coclass at most 7. In this case the order of \(G\) is bounded by \(p^{11}\). \end{abstract}

\maketitle

\section{Introduction}
In this paper all groups are finite \(p\)-groups.  For an irreducible character \(\chi\) of a group \(G\), the codegree of \(\chi\) is defined as \(\cod{\chi}=|G:\ker{\chi}|/\chi(1)\). The set of irreducible characters of \(G\) is denoted \(\irr{G}\), the set of codegrees of the irreducible characters of \(G\) is denoted \(\cod{G}\), and the nilpotence class of \(G\) is \(c(G)\). In \cite{codandnil}, Du and Lewis showed that a \(p\)-group with exactly two codegrees is elementary abelian, and a \(p\)-group with exactly three codegrees has nilpotence class at most 2. Our original purpose was to find a sharp bound for the nilpotence class of groups with exactly four codegrees. Ideally, this bound would be a simple constant. At this time, our best result for arbitrary \(p\)-groups depends on the largest codegree, but in several more specific cases we have attained the predicted bound of nilpotence class at most 4.

\begin{thm}
\label{tiered class4}
Let \(G\) be a finite \(p\)-group such that \(\cod{G}=\{1,p,p^b,p^a\}\) where \(2\le b<a\). If any of the following hold, then \(G\) has nilpotence class at most 4:
\begin{enumerate} [label=(\roman*),font=\upshape]
\item \label{class4_cd2} \(|\cd{G}|=2\), 
\item \label{class4_cd3}  \(\cd{G}=\{1,p,p^2\}\),
\item \label{2gen_class4} \(|G:G'|=p^2\).
\end{enumerate}
\end{thm}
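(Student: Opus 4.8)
The plan is to induct on $|G|$. We may assume $G$ is nonabelian, since otherwise $c(G)=1$. The key elementary fact is that inflation of characters preserves codegrees, so $\cod{G/N}\subseteq\cod G$ for every $N\trianglelefteq G$; in particular every proper quotient of $G$ has at most four codegrees. Fix a central subgroup $Z$ of order $p$. If $|\cod{G/Z}|\le 3$, then $c(G/Z)\le 2$ by \cite{codandnil}, so $\gamma_3(G)\le Z$ and $\gamma_4(G)=[\gamma_3(G),G]\le[Z,G]=1$, whence $c(G)\le 3$; so we may assume $\cod{G/Z}=\cod G$. I would then verify that $G/Z$ inherits the hypothesis: in (i) and (ii) this holds because $\cd{G/Z}\subseteq\cd G$ and $\cd{G/Z}=\{1\}$ is excluded (it would give $c(G)\le 2$); in (iii), $|G:G'|=p^2$ forces $G'=\Phi(G)$, hence $G/G'$ elementary abelian, and every central subgroup of order $p$ lies in $G'$ (since $G'\cap Z(G)\ne 1$, and a central $Z\not\le G'$ of order $p$ would make $G/Z$ cyclic and $G$ abelian), so $|G/Z:(G/Z)'|=|G:G'Z|=p^2$. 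In each case induction yields $c(G/Z)\le 4$, i.e. $\gamma_5(G)\le Z$, so it remains to show that $G$ does not have class exactly $5$.

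Next I would exploit the freedom in the choice of $Z$. If $Z(G)$ is noncyclic, two distinct subgroups $Z_1,Z_2$ of order $p$ give $\gamma_5(G)\le Z_1\cap Z_2=1$, and we are done; so assume $Z(G)$ is cyclic. Then $G$ has a unique minimal normal subgroup $Z$, hence a faithful $\chi\in\irr G$ with $\cod\chi=|G|/\chi(1)$, and every non-faithful irreducible character of $G$ contains $Z$ in its kernel, so $\irr{G/Z}$ is precisely the set of non-faithful irreducible characters of $G$. Since $\cod{G/Z}=\cod G=\{1,p,p^b,p^a\}$, all four codegrees must be realized by non-faithful characters, whereas the faithful characters realize only codegrees of the form $|G|/\psi(1)$, which all lie in $\{p^b,p^a\}$.

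The hypotheses now enter to control the degrees and the abelianization of $G$. In case (i) every faithful character has degree $p^k$, so they all share the single codegree $|G|/p^k=p^s$ with $s\in\{b,a\}$; a non-faithful character of codegree $p^s$ cannot be nonlinear (that forces $|G:\ker\psi|=|G|$, hence $\psi$ faithful), so it is linear and $\exp(G/G')\ge p^s$, giving $|G'|\le|G|/p^s=p^k$. From the strictly descending series $G'=\gamma_2(G)\supsetneq\gamma_3(G)\supsetneq\cdots\supsetneq\gamma_{c(G)+1}(G)=1$ we get $c(G)\le\log_p|G'|+1\le k+1$, while $p^{2k}=\chi(1)^2\le|G:Z(G)|$ together with $|G|=p^{k+s}$ and $|Z(G)|\ge p$ gives $k\le s-1$; hence $c(G)\le s$, and an analysis of the few residual configurations — in which $|G'|=p^k$ is small and the lower-central factors are uniform — forces $c(G)\le 4$. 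Cases (ii) and (iii) run along the same lines: in (ii) the faithful characters have degree $p$ or $p^2$, and $\cd G=\{1,p,p^2\}$ again limits $|G:Z(G)|$ and the kernels of the non-faithful characters; in (iii), $G/G'$ is elementary abelian of order $p^2$, so linear characters contribute only the codegrees $1$ and $p$, and the $2$-generated lower-central-series calculus (each $\gamma_i(G)/\gamma_{i+1}(G)$ generated by at most the Witt number of basic commutators in two letters) restricts the kernels of the nonlinear characters enough to preclude class $5$.

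I expect the last step — showing that in the reduced situation ($G$ nonabelian, $Z(G)$ cyclic, $\cod{G/Z}=\cod G$) a group of class $5$ would acquire a fifth codegree — to be the technical heart of the argument. It amounts to a precise comparison of the codegrees contributed by the faithful characters (all of the form $|G|/\psi(1)$, with $\psi(1)$ pinned down by the hypothesis) against those contributed by the non-faithful characters (governed by the structure of $G/Z$ and of $G/G'$), and in particular to ruling out that $\exp(G/G')$ is large — the single point where one really needs an a priori bound on $a$, equivalently on $|G'|$. The inductive scaffolding is routine; this bookkeeping, carried out separately under each of the three hypotheses, is where the work lies.
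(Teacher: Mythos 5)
Your inductive scaffolding---pass to $G/Z$ for $Z$ central of order $p$, reduce to the case $\cod{G/Z}=\cod{G}$, then to $Z(G)$ cyclic and hence to the existence of a faithful irreducible character, and finally rule out class exactly $5$---is sound and is essentially the reduction the paper packages into Lemma \ref{minimalfaithful}. The problem is that the step you yourself call ``the technical heart of the argument,'' namely showing that a class-$5$ group cannot survive in the reduced situation, is not actually carried out in any of the three cases, and that is where all the work lies. Case (i) is the closest to complete: from the existence of a non-faithful \emph{linear} character of codegree $p^s$ you could extract $\{p,p^2,\dots,p^s\}\subseteq\cod{G}$, hence $s\le 3$, which together with your inequalities $c(G)\le k+1$ and $k\le s-1$ would finish the case. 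But you stop at ``an analysis of the few residual configurations forces $c(G)\le 4$'' without making the one observation ($s\le 3$) that closes the loop; without it $s$, and hence $a$, is a priori unbounded, which is exactly the difficulty you flag in your last paragraph and never resolve. In cases (ii) and (iii) there is no argument at all, only a description of the quantities an argument would have to control.

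Concretely, the paper's proofs of (ii) and (iii) rest on inputs that nothing in your sketch replaces. For (ii) one needs that a faithful character of the minimal counterexample has codegree $p^a$, the \emph{largest} codegree (Lemma \ref{4cods_faithfulchi_pa}, itself a substantial argument), and that a faithful character of degree $p$ forces $\cd{G}=\{1,p\}$ (Lemma \ref{faithfulp}); these pin down $|G|=p^{a+2}$ with $a\ge 4$, and the contradiction comes from comparing $p^{a+1}\le\gamma(1)\cod{\gamma}\le|G:Z_2|\le p^a$ for a suitable $\gamma\in\irr{G/Z_2}$. For (iii) the paper computes $\cod{G/G_3}=\{1,p,p^2\}$ and $\cod{G/G_4}=\{1,p,p^2,p^a\}$ to force $a\le 4$ and $|G|=p^7$, and then repeatedly invokes the external fact that a maximal class $p$-group of order at least $p^4$ has $p^3$ as a codegree (Lemma \ref{maxp3}), applied to quotients $G/N$ of orders $p^4$ and $p^5$; your ``two-generated lower-central-series calculus'' does not substitute for this. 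As written, the proposal establishes the standard reduction but not the theorem.
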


The coclass of a \(p\)-group \(G\) with nilpotence class \(c\) is given by \(\log_p{|G|}-c\). If \(G\) has four codegrees and coclass at most \(3\), then \(G\) has nilpotence class at most 4, bounding the order of \(G\).

\begin{thm} \label{class4_coclasses}
Let \(G\) be a \(p\)-group such that \(\cod{G}=\{1,p,p^b,p^a\}\), where \(2\le b<a\). If \(G\) has coclass at most 3, then \(G\) has nilpotence class at most \(4\), and \(|G|\le p^7\).
\end{thm}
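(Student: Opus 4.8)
The plan is to reduce the whole statement to Theorem~\ref{tiered class4} by a short case analysis, using the coclass bound to pin down $|G:G'|$. First note that the order bound is a formal consequence of the class bound: if $\c{G}\le 4$ and $G$ has coclass at most $3$, then $|G|=p^{\c{G}+\mathrm{coclass}(G)}\le p^{7}$. So it suffices to prove $\c{G}\le 4$. If $G$ is abelian this is clear, so assume $\c{G}\ge 2$; then $G/G'$ is noncyclic, $|G:G'|\ge p^{2}$, and since every lower central factor $\gamma_i(G)/\gamma_{i+1}(G)$ with $1\le i\le\c{G}$ has order at least $p$, the definition of coclass forces $\log_p|G:G'|-1\le\mathrm{coclass}(G)\le 3$. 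Hence $|G:G'|\in\{p^{2},p^{3},p^{4}\}$.

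If $|G:G'|=p^{2}$, Theorem~\ref{tiered class4} applies directly through its hypothesis $|G:G'|=p^{2}$. In the two remaining cases the coclass hypothesis makes the lower central series very thin below $G'$: if $|G:G'|=p^{4}$ then $\mathrm{coclass}(G)=3$ and every factor $\gamma_i(G)/\gamma_{i+1}(G)$ with $i\ge 2$ has order exactly $p$; if $|G:G'|=p^{3}$ then all such factors have order $p$ with at most one exception, of order $p^{2}$. The goal in these cases is to show that $\cd{G}$ is correspondingly small---either $|\cd{G}|=2$ or $\cd{G}=\{1,p,p^{2}\}$---and then to invoke the matching hypothesis of Theorem~\ref{tiered class4}. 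To control the character degrees I would combine three ingredients: (a) the elementary inequality $\chi(1)\le\cod{\chi}\le p^{a}$ valid for every $\chi\in\irr{G}$, coming from $\chi(1)^{2}\le|G:\ker{\chi}|=\chi(1)\cod{\chi}$; (b) the fact that a nonlinear irreducible character of a $p$-group has codegree strictly larger than $p$, so that $\cod{\chi}\in\{p^{b},p^{a}\}$ for every nonlinear $\chi$; and (c) the structural consequences of a thin lower central series---namely that $Z(G)$ is small and that $G$ has an abelian normal subgroup of small index, so that Ito's theorem bounds $\chi(1)$. Pushing the constraint $\cod{G}=\{1,p,p^{b},p^{a}\}$ through a character that is faithful, or whose kernel meets the last term $\gamma_{\c{G}}(G)$ trivially, should then determine the possible degrees.

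I expect the genuine obstacle to be the case $|G:G'|=p^{3}$, where the hypothesis $|G:G'|=p^{2}$ of Theorem~\ref{tiered class4} is unavailable and one must actually extract from ``coclass at most $3$'' enough rigidity to exclude an irreducible character of degree $p^{3}$ or larger. The natural approach is a proof by contradiction: assuming $\c{G}\ge 5$, the group has order at least $p^{7}$, its lower central series has the thin shape described above, and one plays the two large codegrees $p^{b}$ and $p^{a}$ against the orders of the chief factors $\gamma_i(G)/\gamma_{i+1}(G)$. A character of large degree that is faithful, or nearly so, should either force a codegree strictly between $p$ and $p^{b}$ or produce a fifth codegree, contradicting $|\cod{G}|=4$. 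The delicate points are handling the single exceptional factor of order $p^{2}$ that can occur when $|G:G'|=p^{3}$, and ruling out a character of degree $p^{3}$ (the numerical inequality $\chi(1)^{2}\le|G:Z(G)|$ alone does not suffice here, so one must use the structure of small-coclass groups). Once $\cd{G}$ has been shown to be $\{1,p\}$, $\{1,p^{2}\}$, $\{1,p,p^{2}\}$, or the like, Theorem~\ref{tiered class4} yields $\c{G}\le 4$, and the order bound $|G|\le p^{7}$ follows as in the first paragraph.
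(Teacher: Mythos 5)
Your reduction of the order bound to the class bound, and your observation that coclass $\le 3$ forces $|G:G'|\in\{p^2,p^3,p^4\}$ (with the case $|G:G'|=p^2$ disposed of by Theorem~\ref{tiered class4}~\ref{2gen_class4}), are both correct and match the paper's starting point, which likewise treats coclass $1$ via Theorem~\ref{tiered class4}~\ref{2gen_class4}. But the heart of your argument --- showing that when $|G:G'|=p^3$ or $p^4$ the degree set collapses to $|\cd{G}|=2$ or $\cd{G}=\{1,p,p^2\}$ so that parts \ref{class4_cd2} or \ref{class4_cd3} of Theorem~\ref{tiered class4} apply --- is never carried out; you list ingredients and say what ``should'' happen, and you yourself flag the case $|G:G'|=p^3$ as the genuine obstacle. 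This is not a repairable bookkeeping omission: the reduction you propose cannot succeed as stated. In the paper's analysis of a minimal counterexample of coclass $3$ (Lemma~\ref{class4_coclass3}), one is driven to a configuration with $|G|=p^8$, $a=5$, $|G:G'|=p^3$, and a faithful irreducible character of codegree $p^5$, hence of degree $p^3$. So the dangerous configuration really does have $p^3\in\cd{G}$ and $|G:G'|=p^3$, and none of the three hypotheses of Theorem~\ref{tiered class4} is available; one cannot first prove $\cd{G}\subseteq\{1,p,p^2\}$ and then cite that theorem.

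The paper instead kills this configuration by a direct structural argument: it passes to $G/Z_2$, which has class $3$ and therefore retains the codegree $p^a$, extracts a faithful degree-$p$ character of that quotient (so $\cd{G/Z_2}=\{1,p\}$ and $G/Z_2$ has an abelian subgroup of index $p$ by Lemmas~\ref{faithfulp} and \ref{cd 1p}), and then combines Lemma~\ref{1stlem}, Hall's theorem that $G'>Z_3$ (Theorem 2.47 of \cite{contribution}), and Lemma 12.12 of \cite{thebook} to pin down $|G'|$, $|Z_3|$, $|Z_4|$, and $|G_3|$, reaching a contradiction from $Z_3/Z_2$ being simultaneously cyclic and forced to contain a noncentral $p$-th power. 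A similar but shorter argument handles coclass $2$ (Lemma~\ref{class4_coclass2}). If you want to complete your write-up, you will need an argument of this kind for the cases $|G:G'|=p^3$ and $p^4$ rather than a reduction to the character-degree hypotheses of Theorem~\ref{tiered class4}.
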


With the following additional hypothesis, we can extend the result of Theorem \ref{class4_coclasses} to \(p\)-groups with coclass at most \(7\). 

\begin{hyp*} If \(G\) is a \(p\)-group with nilpotence class \(n\) such that \(|G|\ge p^{2n}\), then \(|Z_2(G)|\ne p^2\).
\end{hyp*}

\begin{thm}
\label{star_class4_coclasses}
Let \(G\) be a finite \(p\)-group with \(\cod{G}=\{1,p,p^b,p^a\}\) where \(2\le b<a\). If \(G\) has coclass at most 7, and \(G\) and all of its quotients satisfy Hypothesis \((\ast)\), then the nilpotence class of \(G\) is at most 4 and \(|G|\le p^{11}\).
\end{thm}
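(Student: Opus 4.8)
The plan is to prove $c(G)\le 4$ by contradiction; the order bound is then automatic, since $c(G)\le 4$ and coclass at most $7$ force $|G|=p^{c(G)+\mathrm{coclass}(G)}\le p^{11}$. So I would assume the class bound fails and pick, among all $p$-groups satisfying the hypotheses of the theorem and having class at least $5$, one group $G$ of least order. The first step is to reduce to $c(G)=5$. Since $\cod{G/N}\subseteq\cod{G}$ for every $N\trianglelefteq G$, and a $p$-group with at most three codegrees has class at most $2$ by Du and Lewis \cite{codandnil}, every nonabelian quotient of $G$ of class at least $3$ has codegree set exactly $\{1,p,p^b,p^a\}$. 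Moreover $\gamma_i(G/N)/\gamma_{i+1}(G/N)$ is a homomorphic image of $\gamma_i(G)/\gamma_{i+1}(G)$, so a quotient that preserves the class cannot increase the coclass; applying this to $G/\gamma_6(G)$, which has class exactly $5$ because $c(G)\ge 5$, one sees that $G/\gamma_6(G)$ also satisfies all the hypotheses and has class $\ge 5$, so minimality gives $\gamma_6(G)=1$ and $c(G)=5$.

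Next one exploits minimality again: every proper quotient $G/N$ with $N\ne 1$ must have class at most $4$, that is $\gamma_5(G)\le N$, so $\gamma_5(G)$ lies in every minimal normal subgroup of $G$. Since $\gamma_5(G)\ne 1$ and the minimal normal subgroups of a $p$-group are exactly the order-$p$ subgroups of $Z(G)$, this forces $G$ to have a unique minimal normal subgroup, $Z(G)$ to be cyclic, and $\gamma_5(G)=\Omega_1(Z(G))$ to have order $p$. Because $Z(G)$ is cyclic, $G$ has a faithful $\chi\in\irr{G}$, and then $\cod{\chi}=|G|/\chi(1)\in\{p^b,p^a\}$. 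Applying Theorem \ref{tiered class4} in the contrapositive to the class-$5$ group $G$ yields $|G:G'|\ge p^3$, $|\cd{G}|\ge 3$, and $\cd{G}\ne\{1,p,p^2\}$. Writing $p^{d_i}=|\gamma_i(G)/\gamma_{i+1}(G)|$ and letting $r$ be the coclass, one has $d_1\ge 3$, $d_5=1$, and $\sum_{i=1}^5 d_i=5+r$, so $\sum_{i=2}^4(d_i-1)\le r-2$; in particular $r\ge 2$, and since coclass at most $3$ would contradict Theorem \ref{class4_coclasses}, one may assume $4\le r\le 7$, leaving only finitely many tuples $(d_1,d_2,d_3,d_4)$.

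The final step is a case analysis over these tuples. For each, the bounds $\chi(1)^2\le|G:Z(G)|$ and $\chi(1)\le|G:A|$ for abelian $A\le G$, the relation $\cod{\chi}=|G|/\chi(1)$, and the observation that every quotient $G/\gamma_j(G)$ with $j\ge 4$ again has codegree set $\{1,p,p^b,p^a\}$ together pin down $\chi(1)$, $|Z(G)|$ and the short list of possibilities for $\cd{G}$. When $r\ge 5$ one has $|G|\ge p^{10}=p^{2c(G)}$, so Hypothesis $(\ast)$ applies to $G$ itself and gives $|Z_2(G)|\ne p^2$; as $Z(G)$ is cyclic this forces $|Z_2(G)|\ge p^3$, sharpening the degree estimates enough that $\cd{G}$ must be $\{1,p,p^2\}$, or $|G:G'|=p^2$, or $|\cd{G}|=2$, each contradicting Theorem \ref{tiered class4}. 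When $r=4$ one argues the same way, but now Hypothesis $(\ast)$ is invoked for the quotients $G/\gamma_4(G)$ and $G/\gamma_5(G)$, whose orders reach $p^{2c}$ relative to their respective classes, to rule out the remaining thin configurations. As every case is impossible, $c(G)\le 4$ and $|G|\le p^{11}$.

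The hard part will be this final step: for each coclass profile with $4\le r\le 7$ one must identify $Z(G)$ and $Z_2(G)$ precisely and check that every surviving pattern of character degrees collides with Theorem \ref{tiered class4}. I expect the cleanest packaging is a single lemma asserting that a class-$5$ $p$-group with four codegrees and cyclic center must have $\cd{G}=\{1,p,p^2\}$, and that Hypothesis $(\ast)$ is exactly the additional input needed to handle the subcase where $Z_2(G)$ is small — the one place where the bare coclass bound does not suffice.
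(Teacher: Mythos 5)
Your skeleton — minimal counterexample, reduction to class $5$, existence of a faithful irreducible character via a unique minimal normal subgroup, and then invoking Theorem \ref{tiered class4} in the contrapositive to force $|G:G'|\ge p^3$ and $\cd{G}\notin\{\{1,p\},\{1,p,p^2\}\}$ — matches the paper's strategy, and your reduction steps (via $G/\gamma_6(G)$ and the lower central series rather than the paper's coclass-by-coclass induction through $G/Z$) are sound. But the proof stops exactly where the actual work begins. The entire content of the paper's argument is the case analysis you defer to "the final step": for each coclass the paper pins down $|Z_2|$, $|Z_3|$, $|G'|$, and $|Z_4|$ using a specific toolkit — Hall's result that $G'>Z_3$ for class-$5$ groups, the identity $|G|=p|G'||Z(G)|$ for groups with an abelian subgroup of index $p$ (Lemma \ref{1stlem}), the Isaacs--Passman characterization of $\cd{}=\{1,p\}$ (Lemma \ref{cd 1p}), and an exponent argument showing each $Z_i/Z_{i-1}$ is elementary abelian, which then collides with the cyclicity of $Z_3/Z_2$ forced by faithful degree-$p$ characters of $G/Z_2$. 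None of this appears in your proposal; asserting that the bounds "pin down $\chi(1)$, $|Z(G)|$ and the short list of possibilities for $\cd{G}$" is not a proof, and the contradiction you claim (that every case collapses onto one of the three parts of Theorem \ref{tiered class4}) is not how the paper's cases actually close — most of them end in an internal structural contradiction about $Z_3/Z_2$ or $Z_4$, not in a direct collision with Theorem \ref{tiered class4}.

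Two further concrete problems. First, your packaging suggestion — a lemma that a class-$5$ $p$-group with four codegrees and cyclic center must have $\cd{G}=\{1,p,p^2\}$ — would, combined with Theorem \ref{tiered class4}\ref{class4_cd3}, show that \emph{no} such group exists, with no coclass restriction and no Hypothesis $(\ast)$; that is far stronger than anything the paper proves and is exactly the open statement the coclass hypothesis and Hypothesis $(\ast)$ are there to circumvent, so you should not expect such a lemma to be available. Second, your use of Hypothesis $(\ast)$ is too coarse: for coclass $4$ one has $|G|=p^9<p^{2\cdot 5}$, so $(\ast)$ does not apply to $G$ itself, and the paper must apply it to carefully chosen quotients (e.g.\ $G/N$ for normal $N$ between $Z$ and $Z_2$, or between $Z_2$ and $Z_3$) whose class and order satisfy the hypothesis; identifying which quotients work, and what $(\ast)$ yields for each, is itself a nontrivial part of the argument that your sketch does not supply. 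As written, the proposal is an outline with the decisive steps missing.
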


In \cite{codandnil}, Du and Lewis were able to bound \(c(G)\) in terms of the largest member of \(\cod{G}\). They showed that if \(p^a\) (where \(a>1\)) is the largest codegree of \(G\),  then \(c(G)\le 2a-2\), and in some specific cases, \(c(G)\le 2a-3\). When \(|\cod{G}|=4\), we can improve this bound. 

\begin{thm}
\label{class_aplus1}
If \(G\) is a finite \(p\)-group with \(\cod{G}=\{1,p,p^b,p^a\}\) where \(2\le b< a\), then \(c(G)\le a+1\).
\end{thm}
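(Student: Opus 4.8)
\emph{Strategy.} The plan is to induct on $|G|$ to prove $c(G)\le a+1$. The cases $c(G)\le 1$ are trivial, so assume $c:=c(G)\ge 2$. Since $a>b\ge 2$ we have $a\ge 3$, hence $a+1\ge 4$; thus whenever $G$ satisfies one of the hypotheses of Theorem~\ref{tiered class4} we get $c(G)\le 4\le a+1$, and whenever $|\cod{G}|\le 3$ we get $c(G)\le 2\le a+1$ by the result of Du and Lewis in \cite{codandnil}. We use two routine facts throughout: for $N\trianglelefteq G$ one has $\cod{G/N}\subseteq\cod{G}$, so every proper quotient of $G$ has at most four codegrees; and a nonlinear $\psi\in\irr{G}$ has $\cod{\psi}\ge p^{2}$, since $\cod{\psi}=p$ would force $G/\ker{\psi}$ to be a nonabelian group of order $p^{2}$. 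Hence $p^{b}$ and $p^{a}$ are afforded only by nonlinear characters, and every faithful nonlinear character has codegree $p^{b}$ or $p^{a}$.

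\emph{Reduction to a faithful character.} Fix $N\le\gamma_{c}(G)$ of order $p$ and choose $\chi\in\irr{G}$ lying over a faithful linear character of $N$. Then $N\cap\ker{\chi}=1$, so the image of $N$ in $\bar G:=G/\ker{\chi}$ is a nontrivial central subgroup contained in $\gamma_{c}(\bar G)$; consequently $c(\bar G)=c$, and $\chi$ is faithful on $\bar G$ and nonlinear (being nontrivial on a subgroup of $\gamma_{c}(\bar G)\le\bar G'$). If $\ker{\chi}\ne 1$ then $|\bar G|<|G|$, and either $|\cod{\bar G}|\le 3$, giving $c(\bar G)\le 2$, or $\cod{\bar G}=\{1,p,p^{b},p^{a}\}$ and induction gives $c(\bar G)\le a+1$; in both cases $c(G)=c(\bar G)\le a+1$. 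So we may assume $\chi$ is faithful on $G$ itself; then $Z(G)$ is cyclic, $G$ has a unique minimal normal subgroup $M$, and $M\le\gamma_{c}(G)$.

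\emph{The easy sub-cases.} Write $\chi(1)=p^{e}$, $\cod{\chi}=p^{m}$ with $m\in\{b,a\}$, and $|Z(G)|=p^{z}$, so $|G|=p^{m+e}$. Since $\chi$ is faithful, $\chi(1)^{2}\le |G:Z(G)|$, which gives $z\le m-e$; and counting the factors of the upper central series of $G$ (whose bottom factor has order $p^{z}$ and whose other $c-1$ factors have order at least $p$) gives $|G|\ge p^{z+c-1}$, i.e.\ $c\le m+e-z+1$; also $c\le\log_{p}|G|-1=m+e-1$. Therefore, if $e\le 2$ or $e\le z$ then $c\le m+1\le a+1$ and we are done. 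If $|\gamma_{c}(G)|>p$, then $\gamma_{c}(G/M)\ne 1$, so $G/M$ has class $c$ and at most four codegrees, and $c=c(G/M)\le a+1$ by Du--Lewis or by induction. Hence we may assume $e\ge 3$, $z\le e-1$, and $\gamma_{c}(G)=M$ has order $p$; then $c(G/M)=c-1$, and since $m\ge e+z\ge 4$ we have $a\ge 4$. Finally we may assume $G$ satisfies no hypothesis of Theorem~\ref{tiered class4}, so $|G:G'|\ge p^{3}$, $|\cd{G}|\ge 3$, and $\cd{G}\ne\{1,p,p^{2}\}$.

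\emph{The main obstacle.} All that remains is this residual configuration, where moreover $\cod{G/M}=\{1,p,p^{b},p^{a}\}$ (otherwise $c(G/M)\le 2$ by Du--Lewis and we are done), so that induction applied to $G/M$ gives only $c=c(G/M)+1\le a+2$; the whole difficulty is to save the last unit, i.e.\ to show $c(G/M)\le a$ (equivalently, to rule out the would-be class-$(a+2)$ groups). The tool I would use is that every $\psi\in\irr{G}$ lying over a fixed faithful linear character $\mu$ of $Z(G)$ has trivial kernel — because $M$ is the unique minimal normal subgroup of $G$ and $M\nleq\ker{\psi}$ — and is therefore nonlinear with $\cod{\psi}\in\{p^{b},p^{a}\}$; thus these characters take only the two degrees $p^{m+e-a}$ and $p^{m+e-b}$, and the identity $\sum_{\psi}\psi(1)^{2}=|G:Z(G)|=p^{m+e-z}$ (the sum over the $\psi$ lying over $\mu$) imposes a rigid divisibility relation among $a,b,e,z$. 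Combining that relation with the upper-central-series length estimate, with the constraints on $\cd{G/M}$ and $\cod{G/M}$, and with the failure of the hypotheses of Theorem~\ref{tiered class4} for $G$, I expect to force $c(G)\le a+1$. This Diophantine-and-structural analysis of the residual case is the heart of the matter; everything else is bookkeeping for the inductive reduction together with appeals to Theorem~\ref{tiered class4} and to \cite{codandnil}.
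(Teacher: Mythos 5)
Your reduction steps are sound as far as they go, but the proposal has a genuine gap: the residual case, which you yourself flag as ``the main obstacle,'' is exactly the content of the theorem and is never actually proved. After the inductive reduction, all you obtain is $c(G)\le c(G/M)+1\le a+2$, and your plan for eliminating $c(G)=a+2$ is only a sketch (``I expect to force $c(G)\le a+1$''). The proposed tool --- the identity $\sum_{\psi}\psi(1)^{2}=|G:Z(G)|$ over the characters lying over a faithful linear character of $Z(G)$, combined with the two-degree restriction --- is not carried out, and it is not evident that the resulting ``Diophantine'' constraints alone rule out the bad configuration; you would at minimum need to exhibit the contradiction explicitly. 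As written, the argument proves only $c(G)\le a+2$, which is already weaker than what is needed.

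The paper closes precisely this case with a short argument you are missing: for a minimal counterexample one has $c(G)=a+2$ (class $\ge a+3$ would pass to $G/Z(G)$, which by Du--Lewis still has all four codegrees, contradicting minimality), and $G$ has a faithful irreducible character of codegree at most $p^{a}$, so $|G|\le p^{2a-1}$ by Lemma~\ref{codorder}. Since the upper central series is strictly increasing, $|Z_{a-1}(G)|\ge p^{a-1}$, whence $|G:Z_{a-1}(G)|\le p^{a}$. But $G/Z_{a-1}(G)$ has class exactly $3$, so it retains the codegree $p^{a}$; a character $\chi$ of this quotient with $\cod{\chi}=p^{a}$ satisfies $p^{a}\le\chi(1)p^{a}=|G:\ker{\chi}|\le|G:Z_{a-1}(G)|\le p^{a}$, forcing $\chi$ to be linear and faithful and the quotient to be cyclic --- impossible for a class-$3$ group. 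This comparison of the index of $Z_{a-1}(G)$ (bounded above by the faithful-character order bound) against the index forced by a codegree-$p^{a}$ character is the one idea your proposal lacks; your ``easy sub-cases'' bounds ($c\le m+e-z+1$, etc.) do not substitute for it.
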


This bound can be improved slightly when the two largest codegrees are consecutive powers of \(p\) and the group does not have \(p^2\) as  a codegree.

\begin{thm}
\label{class_a}
If \(G\) is a finite \(p\)-group such that \(\cod{G}=\{1,p,p^{a-1},p^a\}\) for \(a\ge 4\), then \(\c{G}\le a\).  
\end{thm}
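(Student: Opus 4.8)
The argument I have in mind is by contradiction: let $G$ be a counterexample of minimal order, so that $\cod{G}=\{1,p,p^{a-1},p^a\}$ with $a\ge 4$ while, by Theorem~\ref{class_aplus1}, $\c{G}=a+1$. The first step reduces to the faithful case. For any $N\trianglelefteq G$ with $N\ne 1$ we have $\cod{G/N}\subseteq\cod{G}$, and either $|\cod{G/N}|\le 3$, in which case $\c{G/N}\le 2$ by the Du--Lewis theorem that a $p$-group with at most three codegrees has nilpotence class at most $2$, or $|\cod{G/N}|=4$, in which case $\cod{G/N}=\cod{G}$, so $G/N$ again satisfies the hypotheses and, being smaller than $G$, is not a counterexample, whence $\c{G/N}\le a$. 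Either way $\gamma_{a+1}(G)\le N$; since $\gamma_{a+1}(G)\ne 1$ and distinct minimal normal subgroups intersect trivially, $G$ has a unique minimal normal subgroup. Consequently $Z(G)$ is cyclic and $G$ admits a faithful irreducible character $\chi$; as $G$ is nonabelian, $\chi$ is nonlinear, so $\cod{\chi}\in\{p^{a-1},p^a\}$ and $|G|=\chi(1)\cdot\cod{\chi}$ with $\chi(1)$ dividing $\cod{\chi}$.

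Write $\chi(1)=p^{j}$ and $\cod{\chi}=p^{m}$, with $m\in\{a-1,a\}$ and $j\le m$. Two inequalities now constrain $j$. First, $\chi(1)^{2}\le|G:Z(G)|$ together with $|Z(G)|\ge p$ gives $2j\le(j+m)-1$, so $j\le m-1$. Second, since $\c{G}=a+1\ge 5$, Theorem~\ref{class4_coclasses} shows that $G$ cannot have coclass at most $3$, so $\log_{p}|G|-\c{G}=(j+m)-(a+1)\ge 4$, that is, $j\ge a+5-m$. Combining these forces $2m\ge a+6$, which already rules out $a\le 5$ and, for each $a\ge 6$, leaves only a short list of admissible pairs $(m,j)$. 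Alongside this I would record, via Theorem~\ref{tiered class4}, that $|\cd{G}|\ge 3$ and $|G:G'|\ge p^{3}$, and, since $a\ge 4$ forces $p^{2}\notin\cod{G}$, that $G/G'$ is elementary abelian of rank at least $3$ while $G$ has no nonabelian quotient of order $p^{3}$ (such a quotient would contribute the codegree $p^{2}$).

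The main obstacle is eliminating the surviving pairs $(m,j)$ for $a\ge 6$. The plan is to exploit the absence of a nonabelian quotient of order $p^{3}$: this restricts severely how $G'$ sits inside $G$ relative to $Z(G)$, and in particular it controls the class-$2$ quotient $G/\gamma_{3}(G)$. I would analyse the chief factors of $G$ lying below $G'$, together with $Z_{2}(G)$, and argue that either a nonabelian quotient of order $p^{3}$ is forced to appear, or else the faithful character $\chi$ of degree $p^{j}$ forces $|G|\le p^{\c{G}+3}$, contradicting the coclass bound of Theorem~\ref{class4_coclasses}. Carrying this out uniformly in $a$ is the crux of the proof; the reductions above and the numerology that handles small $a$ are comparatively routine.
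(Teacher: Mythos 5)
Your reduction to a minimal counterexample with a faithful irreducible character is sound (it essentially reproves Lemma \ref{minimalfaithful}), and the numerology combining \(\chi(1)^2\le |G:Z(G)|\) with the coclass bound of Theorem \ref{class4_coclasses} does correctly eliminate \(a\le 5\). But the argument stops exactly where the real work begins: for \(a\ge 6\) you offer only a plan --- ``argue that either a nonabelian quotient of order \(p^3\) is forced to appear, or else \(|G|\le p^{\c{G}+3}\)'' --- with no justification for why either alternative must occur. Nothing you have written establishes that dichotomy, and it is not evident that an analysis of the chief factors below \(G'\) will produce it uniformly in \(a\); as it stands, the crux of the theorem is missing, so this is a genuine gap rather than a complete alternative proof.

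For comparison, the paper's proof does not split on the value of \(a\) at all. It passes to \(H=G/Z_{a-2}\), which has class exactly \(3\) (since \(\c{G}=a+1\)) and hence the same codegree set as \(G\). A character \(\chi\in\irr{H}\) with \(\cod{\chi}=p^a\) is nonlinear by Lemma \ref{nonlinear} (here \(a\ge 4\) gives \(p^2\notin\cod{G}\)), and the squeeze \(p^{a+1}\le\chi(1)p^a=|G:\ker{\chi}|\le|G:Z_{a-2}|\le p^{a+1}\) forces \(\chi\) to be faithful of degree \(p\), so \(\cd{H}=\{1,p\}\) by Lemma \ref{faithfulp}. The contradiction then comes from a second character \(\mu\in\irr{H}\) with \(\cod{\mu}=p^{a-1}\), whose kernel has order \(p\) in \(H\); the two cases \(|Z_{a-1}:Z_{a-2}|=p\) and \(|Z_{a-1}:Z_{a-2}|>p\) are eliminated by locating \(G'\), \(Z(\mu)\), and \(H_3\) relative to the cyclic center of \(H\). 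If you want to salvage your approach, the missing ingredient is precisely this: you must bring the second largest codegree \(p^{a-1}\) into play on a class-\(3\) quotient, rather than working only with the faithful character of \(G\) itself.
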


We expect this work to appear as part of the first author's Ph.D. dissertation at Kent State University.

\section{Bounding nilpotence class}

Our first lemma relates the codegree of a faithful irreducible character to the order of the group.

\begin{lem}\cite[Lemma 3.1]{codandnil} \label{codorder}
Let \(G\) be a group and suppose that \(\chi\in\irr{G}\) is faithful. Then \(|G|=\chi(1)\cod{\chi}<\cod{\chi}^2\).
\end{lem}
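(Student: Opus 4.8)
The plan is to read off both the equality and the inequality directly from the definition of the codegree together with the completeness (sum-of-squares) relation for irreducible characters, invoking nothing about the structure of \(G\) beyond finiteness — in particular avoiding any appeal to \(Z(G)\), which need not be nontrivial for a general group. First I would dispose of the degenerate case \(G=1\) (in which the asserted strict inequality is vacuous/excluded) and assume \(G\ne 1\).

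For the equality, I would use only faithfulness: \(\chi\) faithful means \(\ker{\chi}=1\), so \(|G:\ker{\chi}|=|G|\), and the definition \(\cod{\chi}=|G:\ker{\chi}|/\chi(1)\) collapses to \(\cod{\chi}=|G|/\chi(1)\). Rearranging gives \(|G|=\chi(1)\cod{\chi}\), the first assertion. For the inequality I would observe that, since \(\cod{\chi}>0\), the claim \(\chi(1)\cod{\chi}<\cod{\chi}^2\) is equivalent to \(\chi(1)<\cod{\chi}\), and substituting \(\cod{\chi}=|G|/\chi(1)\) turns this into \(\chi(1)^2<|G|\). Thus it suffices to prove the single strict bound \(\chi(1)^2<|G|\).

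The key step is to obtain this strict bound from the relation \(\sum_{\psi\in\irr{G}}\psi(1)^2=|G|\). Because \(G\ne 1\) and \(\chi\) is faithful, \(\chi\) cannot be the trivial character (the trivial character has kernel \(G\ne 1\)), so \(1_G\) and \(\chi\) are two \emph{distinct} irreducible characters, both occurring in the sum. Hence \(\chi(1)^2+1\le \chi(1)^2+1_G(1)^2\le \sum_{\psi\in\irr{G}}\psi(1)^2=|G|\), which yields \(\chi(1)^2<|G|\) and completes the argument. I expect the only point requiring care is this last step: the strictness hinges on exhibiting a second irreducible constituent, and the trivial character supplies it precisely because faithfulness forces \(\chi\ne 1_G\) once \(G\) is nontrivial, so no hypothesis that \(G\) be a \(p\)-group (or even nilpotent) is needed.
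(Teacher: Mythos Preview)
Your proof is correct. Note, however, that the paper does not supply its own proof of this lemma: it is quoted verbatim from \cite[Lemma~3.1]{codandnil} and stated without argument. So there is nothing in the present paper to compare against.

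That said, it is worth remarking on your choice of method. The source \cite{codandnil} is concerned with \(p\)-groups, and in that setting one also has the standard route via \(\chi(1)^2\le |G:Z(\chi)|=|G:Z(G)|<|G|\), using that a nontrivial \(p\)-group has nontrivial center. Your argument instead extracts the strict inequality \(\chi(1)^2<|G|\) directly from \(\sum_{\psi\in\irr{G}}\psi(1)^2=|G|\) together with \(\chi\ne 1_G\); this requires only finiteness and \(G\ne 1\), so it is genuinely more general than a center-based argument and justifies the lemma as stated (``Let \(G\) be a group'') rather than merely for \(p\)-groups. Your handling of the degenerate case \(G=1\), where the strict inequality fails, is also appropriate: the lemma tacitly excludes it, and in any case every application in the paper is to nontrivial \(p\)-groups.
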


Lemma \ref{nonlinear} can be inferred from \cite{codandnil}.

\begin{lem}
\label{nonlinear}
Let \(G\) be a \(p\)-group with \(p^2\notin\cod{G}\). If \(\chi\) is an irreducible character of \(G\) such that \(\cod{\chi}>p\), then \( \chi\) is non-linear. 
\end{lem}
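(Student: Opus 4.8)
The plan is to prove the contrapositive: if \(\chi\in\irr{G}\) is linear, then \(\cod{\chi}\le p\). So I would start by assuming \(\chi(1)=1\), which gives \(\cod{\chi}=|G:\ker{\chi}|\). Since \(\chi\) is linear, \(G/\ker{\chi}\) is isomorphic to the (finite, cyclic) image \(\chi(G)\le\mathbb{C}^\times\); being a section of the \(p\)-group \(G\), it is cyclic of order \(p^k\) for some \(k\ge 0\), so \(\cod{\chi}=p^k\).

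Next I would suppose for contradiction that \(\cod{\chi}>p\), i.e.\ \(k\ge 2\). The cyclic group \(G/\ker{\chi}\) of order \(p^k\) has a unique subgroup of index \(p^2\); letting \(N\) be its preimage in \(G\), we get \(\ker{\chi}\le N\) with \(N\) normal in \(G\) and \(G/N\) cyclic of order \(p^2\). Inflating a faithful linear character \(\lambda\) of \(G/N\) to \(G\) yields a linear character of \(G\) with kernel exactly \(N\), so \(\cod{\lambda}=|G:N|=p^2\), contradicting \(p^2\notin\cod{G}\). Hence \(k\le 1\) and \(\cod{\chi}\le p\), which is the contrapositive of the statement.

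The argument is genuinely short, and I do not expect a real obstacle. The only step demanding a moment's care is the last one: I need a linear character of \(G\) whose codegree is exactly \(p^2\), and this works precisely because a cyclic group of order \(p^2\) has a faithful linear character, forcing the inflated character to have kernel exactly \(N\) rather than something larger. Equivalently, one can note that a cyclic \(p\)-group of order at least \(p^2\) surjects onto a group of order exactly \(p^2\) and feed a faithful character of that quotient (together with Lemma \ref{codorder}) back into \(G\); either way the content is just the observation that a linear character of codegree exceeding \(p\) produces a cyclic section of order at least \(p^2\).
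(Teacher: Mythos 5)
Your proof is correct, but it takes a slightly different (and more self-contained) route than the paper. The paper also reduces to a linear \(\chi\), notes that \(G/\ker{\chi}\) is then cyclic of order \(\cod{\chi}>p^2\), and gets its contradiction by citing Corollary 2.5 of Du--Lewis, which says that \(p^2\notin\cod{G}\) forces \(G/G'\) to be elementary abelian --- incompatible with \(G'\le\ker{\chi}\) and \(G/\ker{\chi}\) cyclic of order exceeding \(p^2\). You instead bypass that citation: from the cyclic quotient \(G/\ker{\chi}\) of order \(p^k\) with \(k\ge 2\) you pass to the subgroup \(N\) of index \(p^2\) and inflate a faithful linear character of the cyclic group \(G/N\) to produce a linear character of \(G\) with codegree exactly \(p^2\), contradicting the hypothesis directly. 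In effect you have inlined the proof of the quoted corollary (which rests on exactly this construction), so your argument is elementary and needs no external input, at the cost of a few extra lines; the paper's version is shorter on the page but leans on the cited result. One tiny point worth noting either way: under the hypothesis \(p^2\notin\cod{G}\), a codegree greater than \(p\) is automatically at least \(p^3\), which is why the paper can write \(a>2\) from the outset --- your argument does not need this observation, since producing a codegree equal to \(p^2\) already suffices.
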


\begin{proof} 
Let \(\chi\) be an irreducible character of \(G\) such that \(\cod{\chi}=p^a\) for some \(a>2\), and suppose that \(\chi\) is linear.  Then \(\ker{\chi}\ge G' \), so \(G/\ker{\chi}\) is abelian. Since \(\chi\) is a faithful irreducible character of  \(G/\ker{\chi}\), this quotient  must be cyclic, and \(|G/\ker{\chi}|=\chi(1)\cod{\chi}=p^a>p^2\). By Corollary 2.5 of \cite{codandnil}, \(G/G'\) is elementary abelian, which is impossible since \(\ker{\chi}\ge G'\) and \(G/\ker{\chi}\) is cyclic with order greater than \(p^2\). 
\end{proof}

The following useful lemma is a consequence of Proposition 2.5 of \cite{PPO1} and It\^o's Theorem \cite[Theorem 6.15]{thebook}.

\begin{lem}
\label{faithfulp}
If a finite \(p\)-group \(G\) has a faithful irreducible character of degree \(p\), then \(G\) has a normal abelian subgroup of index \(p\) and \(\cd{G}=\{1,p\}\).
\end{lem}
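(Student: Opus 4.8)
The plan is to treat the two conclusions separately. For the existence of a normal abelian subgroup of index $p$, I would first observe that this is essentially the content of Proposition 2.5 of \cite{PPO1}, together with the elementary fact that a subgroup of index $p$ in a $p$-group is normal (it is maximal, and maximal subgroups of nilpotent groups are normal). If one prefers a self-contained argument, I would instead use that $p$-groups are monomial: write the given faithful character $\chi$ with $\chi(1)=p$ as $\chi=\lambda^{G}$ for a linear character $\lambda$ of a subgroup $H\le G$. Comparing degrees gives $|G:H|=\chi(1)=p$ (as $\lambda$ is linear), so $H$ is a normal maximal subgroup of $G$.

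The crux is to show that $H$ is abelian. I would restrict $\chi$ to $H$: since $H\trianglelefteq G$ has prime index, $\chi_{H}=\sum_{x\in G/H}\lambda^{x}$, and hence
\[
\ker{\chi_{H}}=\bigcap_{x\in G/H}\ker{\lambda^{x}}=\bigcap_{x\in G/H}(\ker{\lambda})^{x}.
\]
But $\ker{\chi_{H}}=\ker{\chi}\cap H=1$, because $\chi$ is faithful, so the kernels $\ker{\lambda^{x}}$ intersect trivially. The associated diagonal map then embeds $H$ into the direct product of the quotients $H/\ker{\lambda^{x}}$; each of these is isomorphic to $H/\ker{\lambda}$, which $\lambda$ maps isomorphically onto a finite subgroup of $\mathbb{C}^{\times}$ and so is cyclic. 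A subgroup of a direct product of cyclic groups is abelian, so $H$ is the required normal abelian subgroup of index $p$.

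Finally, for the statement about character degrees I would invoke It\^o's Theorem \cite[Theorem 6.15]{thebook}: the abelian normal subgroup $H$ of index $p$ forces $\psi(1)\mid |G:H|=p$ for every $\psi\in\irr{G}$, so $\cd{G}\subseteq\{1,p\}$, and equality holds because $G$ is nonabelian (it has an irreducible character of degree $p$). I expect the one genuinely substantive step to be the passage from ``$\chi$ faithful of degree $p$'' to ``$H$ abelian'': the idea is to recognize faithfulness of $\chi$ as the statement $\bigcap_{x}\ker{\lambda^{x}}=1$, which is exactly what pins the index-$p$ subgroup $H$ — a priori known only to have a cyclic quotient — inside a direct product of cyclic groups. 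Everything after that is a routine application of It\^o's Theorem.
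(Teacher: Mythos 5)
Your proposal is correct and matches the paper's approach: the paper states this lemma without proof, presenting it precisely as a consequence of Proposition 2.5 of \cite{PPO1} together with It\^o's Theorem \cite[Theorem 6.15]{thebook}, which are exactly the two ingredients you invoke. Your optional self-contained argument---writing $\chi=\lambda^{G}$ by monomiality, using faithfulness to get $\bigcap_{x}\ker{\lambda^{x}}=1$, and embedding $H$ in a product of cyclic groups---is also sound and has the merit of removing the dependence on the Berkovich reference, but it is not a different route so much as an unpacking of the cited proposition.
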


 A group with \(\cd{G}=\{1,p\}\) must satisfy one of two conditions first given by Isaacs and Passman in \cite{characterization1} as Theorem C 4.8. We state this theorem below as it appears in \cite{PPO1}.
 
 \begin{lem}\cite[Theorem 22.5]{PPO1}
 \label{cd 1p}
 If \(G\) is a nonabelian \(p\)-group with \(\cd{G}=\{1,p\}\), then one and only one of the following holds:
 \begin{enumerate} [label=(\roman*),font=\upshape]
 \item \(G\) has an abelian subgroup of index \(p\),
 \item \(G/Z(G)\) is of order \(p^3\) and exponent \(p\). 
 \end{enumerate}
 \end{lem}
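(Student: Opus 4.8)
The plan is to prove this by induction on $|G|$, with Lemma \ref{faithfulp} doing the heavy lifting. The first move is to reduce to the case that (i) fails and to squeeze structure out of that failure. If $G$ has a faithful irreducible character $\chi$, then $\chi(1)=p$ (a nonabelian group has no faithful linear character) and Lemma \ref{faithfulp} immediately yields (i); so suppose (i) fails. Then Lemma \ref{faithfulp} tells us that no irreducible character of degree $p$ is faithful, hence $G$ has no faithful irreducible character at all, hence (a standard fact for $p$-groups) $Z(G)$ is noncyclic, so $\Omega_1(Z(G))$ has order at least $p^2$. I will also use throughout that character degrees pass to quotients, $\cd{G/N}\subseteq\cd{G}$, and to subgroups, $\cd{H}\subseteq\cd{G}$ for $H\le G$: a character of $H$ of degree exceeding $p$ would induce to one of $G$ having an irreducible constituent of degree exceeding $p$. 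Thus every section of $G$ again has all character degrees in $\{1,p\}$.

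For the inductive step, pick a central subgroup $Z_0\le\Omega_1(Z(G))$ of order $p$ with $G'\not\le Z_0$ — possible since if $|G'|>p$ any such $Z_0$ works, while if $|G'|=p$ then $G'\le Z(G)$ and one avoids the unique subgroup $G'$ among the $\ge p+1$ subgroups of order $p$ in $\Omega_1(Z(G))$. Then $\bar G:=G/Z_0$ is nonabelian with $\cd{\bar G}=\{1,p\}$, so by induction $\bar G$ satisfies (i) or (ii). If $\bar G$ satisfies (ii), let $C$ be the preimage in $G$ of $Z(\bar G)$: then $[C,G]\le Z_0$, $|G:C|=p^3$, and $Z(G)\le C$, so it suffices to prove $C=Z(G)$. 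If instead $\bar G$ satisfies (i), with abelian subgroup $\bar A$ of index $p$, let $A\le G$ be its preimage, so $|G:A|=p$ and $A'\le Z_0$; since (i) fails for $G$, $A$ is nonabelian, forcing $A'=Z_0$. Then $A$ is a nonabelian class-$2$ group with $|A'|=p$ and $\cd{A}=\{1,p\}$, normal of index $p$ in $G$; applying the inductive hypothesis to $A$ and tracking the interplay of $Z(A)$, $Z(G)$, and the action on $A$ of an element of $G\setminus A$, one again reaches $|G:Z(G)|=p^3$.

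Once $|G:Z(G)|=p^3$, the exponent-$p$ conclusion follows by ruling out an element of order $p^2$ in $G/Z(G)$: such an element confines $G/Z(G)$ to a short list of groups of order $p^3$ and exponent $p^2$, and combining the commutator pairing on $G/Z(G)$ (here $G$ has class $2$ or $3$) with the fact that no section of $G$ has a character of degree $p^2$ produces a contradiction.

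The genuine obstacle is the pair of lifting arguments in the second paragraph: that $C=Z(G)$ when $\bar G$ is of type (ii), and that the class-$2$ subgroup $A$ forces $|G:Z(G)|=p^3$. The inequality $\chi(1)^2\le|G:Z(G)|$ is too weak for this — it only gives $|G:Z(G)|\le p^4$ — so instead one must argue directly that if the centralizer of the relevant abelian subgroup were even a little larger, one could assemble, by restriction and induction of characters, an irreducible character of degree $p^2$, contradicting the hypothesis. That commutator-and-Clifford bookkeeping, not any single trick, is where the work lies; the reductions and the exponent argument above are comparatively routine.
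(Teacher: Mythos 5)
The paper offers no proof of this lemma at all: it is quoted verbatim from Berkovich \cite[Theorem 22.5]{PPO1} and goes back to Isaacs and Passman, so there is no in-paper argument to measure yours against. Judged on its own terms, your proposal is an outline rather than a proof. The first paragraph is sound: a faithful irreducible character would have degree $p$, Lemma \ref{faithfulp} would then give (i), so if (i) fails $G$ has no faithful irreducible character, $Z(G)$ is noncyclic, and a central subgroup $Z_0$ of order $p$ with $G'\not\le Z_0$ exists. But the entire content of the theorem sits in the two lifting steps you then defer. In the type (ii) branch you write that ``it suffices to prove $C=Z(G)$''; in the type (i) branch you write that analyzing $A$ ``again reaches $|G:Z(G)|=p^3$''; and you explicitly concede that the restriction-and-induction argument needed to close both -- producing an irreducible character of degree $p^2$ whenever the relevant centralizer is too large -- is ``where the work lies,'' without supplying it. That \emph{is} the theorem; what you actually carry out is the easy periphery. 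The exponent-$p$ claim in your last paragraph is likewise a plan rather than an argument (no cases are checked, and $p=2$, where every nonabelian group of order $8$ has exponent $4$, needs separate care).

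To see concretely where the gap bites: in the type (ii) branch you have $[C,G]\le Z_0$ and $|G:C|=p^3$, and you must exclude $Z(G)<C$. Nothing you have established does this -- as you yourself note, the bound $\chi(1)^2\le |G:Z(G)|$ only yields $|G:Z(G)|\le p^4$, and in this branch $G$ may have no faithful irreducible character at all, so there is no single $\chi$ to run the estimate on. Filling this in amounts to redoing the Isaacs--Passman analysis (Theorem 12.11 of \cite{thebook} and the lemmas supporting it), which is a substantial induction with its own case division. Since the lemma is a quoted classical result, citing it, as the authors do, is the right move; if you want to reprove it, you must actually execute the Clifford-theoretic step you have only named.
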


Frequently, we will show through Lemma \ref{faithfulp} or Lemma \ref{cd 1p} that a group has an abelian subgroup of index \(p\), and then use the following lemma to obtain a contradiction about the possible orders of the group, the derived subgroup, and the center.

\begin{lem}\cite[Lemma 1.1]{PPO1}
\label{1stlem} Let \(A\) be an abelian subgroup of index \(p\) of a nonabelian \(p\)-group \(G\). Then \(|G|=p|G'||Z(G)|\).
\end{lem}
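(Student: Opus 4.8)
The plan is to choose an element $g \in G \setminus A$ and analyze the map $\varphi\colon A \to A$ defined by $\varphi(a) = [a,g] = a^{-1}a^{g}$. Since $A$ has index $p$ in the $p$-group $G$ it is normal, so $a^{g}\in A$ for every $a \in A$ and $\varphi$ is well defined. A direct computation, in which one uses that $A$ is abelian to commute the factor $a^{-1}a^{g}\in A$ past elements of $A$, shows that $\varphi$ is a group homomorphism; that is, $[ab,g]=[a,g][b,g]$ for all $a,b\in A$.

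Next I would identify the kernel and the image of $\varphi$. The kernel is $C_{A}(g)$, and I claim it equals $Z(G)$. First, $Z(G)\le A$: a central element lying outside $A$ would, together with $A$, generate an abelian group, contradicting that $G$ is nonabelian. Hence every central element lies in $A$ and commutes with $g$, so $Z(G)\le C_{A}(g)$; conversely, an element of $A$ that centralizes $g$ centralizes $\langle A,g\rangle = G$ and therefore is central. For the image, note that $\operatorname{Im}\varphi = \{[a,g] : a\in A\}$ is a subgroup of $A$; it is invariant under conjugation by $A$ (being contained in the abelian group $A$) and under conjugation by $g$ (since $[a,g]^{g}=[a^{g},g]$ with $a^{g}\in A$), hence normal in $G$. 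Because $G=\langle A,g\rangle$ with $A$ abelian, $G'$ is the normal closure of the set $\{[a,g] : a\in A\}$, which therefore coincides with $\operatorname{Im}\varphi$.

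Finally, applying the first isomorphism theorem to $\varphi$ gives $|A| = |C_{A}(g)|\,|\operatorname{Im}\varphi| = |Z(G)|\,|G'|$, and multiplying by $[G:A]=p$ yields $|G| = p\,|G'|\,|Z(G)|$, as desired.

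The only point requiring care is the identification $\operatorname{Im}\varphi = G'$: one must verify that the commutators $[a,g]$ with $a\in A$ already form a normal subgroup of $G$, so that passing to the normal closure adds nothing. Once that normality check is in place, the standard description of $G'$ as the normal closure of the commutators of a generating set finishes the argument, and every remaining step is routine.
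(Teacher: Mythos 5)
Your argument is correct. The paper does not prove this lemma at all---it is quoted from Berkovich \cite{PPO1} as Lemma 1.1---and your proof (fixing $g\in G\setminus A$, showing $a\mapsto[a,g]$ is a homomorphism $A\to A$ with kernel $Z(G)$ and image $G'$, then applying the first isomorphism theorem and $|G|=p|A|$) is precisely the standard argument for that cited result, with the one delicate point, the normality of $\{[a,g]:a\in A\}$ and hence its identification with $G'$, handled correctly.
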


The proofs of the main theorems will make use of minimal counter examples to reach a contradiction. The next lemma will allow us to shorten these proofs.

\begin{lem}
\label{minimalfaithful}
Let \(G\) be a finite \(p\)-group such that \(\cod{G}=\{1,p,p^b,p^a\}\), where \(2\le b<a\). If \(|G|\) is minimal such that \(G\) has nilpotence class \(n\ge 3\), then \(G\) has a faithful irreducible character.\end{lem}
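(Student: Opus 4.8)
The plan is to argue by contradiction, using the standard characterization of $p$-groups possessing a faithful irreducible character. Suppose $G$ has no faithful irreducible character. For a $p$-group this is equivalent to $Z(G)$ being noncyclic: if $\chi$ is faithful and irreducible then $Z(G)$ is represented by scalars, hence cyclic; conversely, if $Z(G)$ is cyclic then any irreducible character of $G$ lying over a faithful linear character of $Z(G)$ is itself faithful, since every nontrivial normal subgroup of a $p$-group meets $Z(G)$ nontrivially. Thus $Z(G)$ is noncyclic, so it contains two distinct subgroups $N_1,N_2$ of order $p$; since $N_1\ne N_2$ we have $N_1\cap N_2=1$, and therefore $G$ embeds in $G/N_1\times G/N_2$.

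The core of the argument is that each quotient $G/N_i$ is strictly smaller than $G$ but still falls under either the minimality hypothesis or the Du--Lewis results from the introduction. Because inflating an irreducible character from $G/N_i$ to $G$ preserves its degree and the index of its kernel, we get $\cod{G/N_i}\subseteq\cod{G}=\{1,p,p^b,p^a\}$. If $|\cod{G/N_i}|\le 3$, then by \cite{codandnil} the group $G/N_i$ has nilpotence class at most $2$. If instead $|\cod{G/N_i}|=4$, then $\cod{G/N_i}=\{1,p,p^b,p^a\}$, which again has the form required by the hypothesis, and since $|G/N_i|<|G|$ the minimality of $|G|$ forces $c(G/N_i)\le 2$. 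In every case, $c(G/N_i)\le 2$ for $i=1,2$.

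To conclude, recall that the nilpotence class of a direct product equals the maximum of the classes of its factors, and that passing to a subgroup cannot increase the class. Hence $c(G)\le\max\{c(G/N_1),c(G/N_2)\}\le 2$, contradicting $c(G)=n\ge 3$. Therefore $Z(G)$ must be cyclic, and $G$ has a faithful irreducible character.

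I do not anticipate a serious obstacle here. The two points requiring care are the characterization of $p$-groups with a faithful irreducible character in terms of the center (a routine consequence of Schur's lemma together with the fact that nontrivial normal subgroups of a $p$-group intersect the center), and the bookkeeping that ensures the minimality hypothesis genuinely applies to $G/N_i$ precisely when $G/N_i$ still has all four codegrees; outside of that, the proof is essentially the embedding $G\hookrightarrow G/N_1\times G/N_2$.
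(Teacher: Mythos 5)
Your argument is correct in outline but takes a genuinely different route from the paper, and it contains one misstated step. The paper does not pass through the cyclic-center criterion at all: it considers the kernel \(K\) of an arbitrary irreducible character, notes that either \(\cod{G/K}\subsetneq\cod{G}\) (so \(c(G/K)\le 2\) by Du--Lewis) or \(\cod{G/K}=\cod{G}\) and minimality gives \(c(G/K)\le n-1\), and then concludes that \(\gamma_n(G)\) lies in every such kernel; since the intersection of the kernels of all irreducible characters is trivial, \(\gamma_n(G)=1\), contradicting \(c(G)=n\). Your version replaces the intersection over all kernels by the embedding \(G\hookrightarrow G/N_1\times G/N_2\) coming from two distinct central subgroups of order \(p\), which requires the (correctly justified) equivalence between having a faithful irreducible character and \(Z(G)\) being cyclic; the paper's argument is slightly more economical in that it needs no structural fact about \(Z(G)\), while yours isolates the obstruction more concretely.

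The misstep: when \(\cod{G/N_i}=\cod{G}\), the minimality of \(|G|\) only rules out \(c(G/N_i)=n\); since a quotient has class at most \(c(G)=n\), you may conclude \(c(G/N_i)\le n-1\), but not \(c(G/N_i)\le 2\) as you assert --- a smaller group with the same codegree set could perfectly well have class \(n-1\ge 3\) without contradicting the hypothesis. Fortunately the weaker, correct bound suffices: combining the two cases gives \(c(G/N_i)\le n-1\) for \(i=1,2\), hence \(c(G)\le\max\{c(G/N_1),c(G/N_2)\}\le n-1<n\), and the contradiction stands. With that one correction your proof is complete.
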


\begin{proof}
Let \(\cod{G}=\{1,p,p^b,p^a\}\) and let \(|G|\) be minimal such that \(G\) has class \(n\). Let \(K\) be the kernel of an irreducible character of \(G\). Either \(\cod{G/K}\subsetneq \cod{G}\), in which case \(c(G/K)\le 2\), or \(\cod{G/K}=\cod{G}\), and since \(|G|\) is minimal, \(c(G/K)\le n-1\). Hence, there is a central series of \(G\) with the \(n-1\) term contained in \(K\) for all kernels of irreducible characters of \(G\), and since the intersection of these kernels is trivial, \(G\) has class at most \(n-1\), a contradiction. Thus \(G\) must have a faithful irreducible character.\end{proof}

The upper bound on the nilpotence class of \(G\) when \(p^a=\text{max}(\cod{G})\) is given as \(2a-2\) or \(2a-3\) in \cite{codandnil}. Restricting \(|\cod{G}|\) to 4 yields the stronger bound of Theorem \ref{class_aplus1}.


\begin{proof}[Proof of Theorem \ref{class_aplus1}]
Let \( G\) be a minimal counterexample and notice that \(c(G)=a+2\), for otherwise \(c(G)\ge a+3\) implies \(c(G/Z)\ge a+2\ge 5 \), and by Theorem 1.2 of \cite{codandnil}, \(\cod{G/Z}=\{1,p,p^b,p^a\}\), contradicting that \(|G|\) is minimal. By Lemma \ref{minimalfaithful}, \(G\) has a faithful character of codegree at most \(p^a\) and by Lemma  \ref{codorder}, \(|G|\le p^{2a-1}\). Using Theorem 1.2 of \cite{codandnil} and that \(c(G/Z_{a-1})=3\), we  have \(\cod{G/Z_{a-1}}=\cod{G}\). Let \(\chi \) be an irreducible character of \(G/Z_{a-1}\) such that \(\cod{\chi}=p^a\). Then \( p^{a}\le \chi(1)p^a = |G:\ker{\chi}|\le |G:Z_{a-1}|\le p^a \) so \(\chi\) is a faithful linear character of \(G/Z_{a-1}\), which is impossible since \(G/Z_{a-1}\) has class 3. Hence \(c(G)\le a+1\).
\end{proof}

Theorem \ref{class_a} further improves the bound in Theorem \ref{class_aplus1} when the two largest codegrees are consecutive powers of \(p\) and \(p^2\notin \cod{G}\). 


\begin{proof}[Proof of Theorem \ref{class_a}]
Let \(G\) be a minimal counterexample. Then \(\c{G}=a+1\) and by Lemma \ref{minimalfaithful}, \(G\) has a faithful character with codegree at most \(p^a\), giving \(|G|\le p^{2a-1}\). The quotient \(G/Z_{a-2}\) has class 3 and therefore has the same set of codegrees as \(G\). Let \(\chi\) be an irreducible character of \(G/Z_{a-2}\) such that \(\cod{\chi}=p^a\). By Lemma \ref{nonlinear}, \(\chi\) is nonlinear so we have 
\(p^{a+1}\le \chi(1)p^a = |G:\ker{\chi}| \le |G:Z_{a-2}| \le p^{a+1}\), which shows that \(\chi\) is faithful, \(\chi(1)=p\) and by Lemma \ref{faithfulp}, \(\cd{G/Z_{a-2}}=\{1,p\}\). Let \(\mu \) be an irreducible character of \(G/Z_{a-2}\) with codegree \(p^{a-1}\). Then 
\(p^a=\mu(1)\cod{\mu}=|G:\ker{\mu}|\le |G:Z_{a-2}|=p^{a+1}\), which shows that \(|\ker{\mu}:Z_{a-2}|=p\) and hence \(\ker{\mu}\le Z_{a-1}\).

Suppose \(|Z_{a-1}:Z_{a-2}|=p\). Then \(\mu\) is a faithful character of \(G/Z_{a-1}\), and \(Z_a/Z_{a-1}\) is cyclic, so \(Z_{a-1}\le G'\le Z_{a}\). Since \(p^2\notin \cod{G}\), we have that \(G/G'\) is elementary abelian and hence \(|Z_a:G'|\le p\). Notice that \(Z(\mu)=Z_a\), and by Lemma 2.31 of \cite{thebook}, \(|G:Z(\mu)|=p^2\). We also have \(|G':Z_{a-1}|=p\), as \(G'\ge Z_{a-1}\ge G_3\) and \(G'/G_3\) is elementary abelian while \(G'/Z_{a-1}\) is cyclic. If \(G'=Z_a\), then \(|G/Z_{a-2}|=p^4\), which contradicts that \(\chi\) is a faithful nonlinear character with codegree \(p^a\ge p^4\). Thus \(|Z_a:G'|=p\) and we have \(|G/Z_{a-2}|=p^5\). By Lemma 2.6 of \cite{mepsquared}, this contradicts \(p^2\notin \cod{G}\).

We may now assume that \(|Z_{a-1}:Z_{a-2}|>p\). Let \(G/Z_{a-2}=H\), so \(H\) has class 3, \(|Z(H)|>p\), and \(\mu\) is an irreducible character of \(H\) with \(|\ker{\mu}|=p\). Since \(\chi\) is a faithful irreducible character of \(H\), \(Z(H)\) is cyclic and hence \(\ker{\mu}\) is the unique subgroup of \(H\) of order \(p\). Notice that \(H_3\) is elementary abelian, and since it is contained in the cyclic subgroup \(Z(H)\), we have \(H_3=\ker{\mu}\). As \(H/H_3\) has class 2, \(H'/H_3\le Z(H/H_3)=Z(\mu)/H_3\), which shows that \(H/Z(\mu)\) is abelian and hence \(|H:Z(\mu)|=p^2\). We then have that \(Z(\mu)=Z_2(H)\), as \(Z(\mu)\le Z_2(H)\). By Lemma 2.27 of \cite{thebook}, \(Z(\mu)/\ker{\mu}=Z_2(H)/H_3\) is cyclic. We also have \(H_3\le H'\le Z_2(H)\), so \(H'/H_3\) is cyclic as well as elementary abelian, and hence \(|H':H_3|=p\). This is impossible since \(|Z(H):H_3|=p\) and \(|H':Z(H)|\ge p \). Thus \(c(G)\le a\).
\end{proof}

The next lemma can be used to further shorten our proofs bounding the nilpotence class when a group has exactly four codegrees. With this lemma, we can assume that such a group with nilpotence class greater than 4 has at least one faithful irreducible character, and the codegree of any such character must be as large as possible.

\begin{lem}
\label{4cods_faithfulchi_pa} Let \(G\) be a finite \(p\)-group with \(\cod{G}=\{1,p,p^b,p^a\}\) where \(2\le b<a\). If \(G\) has nilpotence class \(5\), then faithful irreducible characters of \(G\) have codegree \(p^a\).
\end{lem}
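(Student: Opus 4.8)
The plan is to argue by contradiction, mimicking the structure of the proofs of Theorems \ref{class_aplus1} and \ref{class_a}. Suppose $G$ has nilpotence class $5$ and $\chi$ is a faithful irreducible character with $\cod{\chi}=p^c$ for some $c\in\{1,b\}$, i.e.\ $\cod{\chi}<p^a$. By Lemma \ref{codorder}, $|G|=\chi(1)\cod{\chi}<\cod{\chi}^2\le p^{2b}$, so $|G|\le p^{2b-1}$, and in particular $|G|<p^{2a}$. The idea is to exploit the quotient $G/Z_2(G)$: since $c(G)=5$, this quotient has class $3$, and by Theorem 1.2 of \cite{codandnil} it has the same codegree set $\{1,p,p^b,p^a\}$ as $G$. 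I would then locate an irreducible character $\psi$ of $G/Z_2(G)$ with $\cod{\psi}=p^a$ (the top codegree) and bound $|G:\ker\psi|=\psi(1)p^a$ from above by $|G:Z_2(G)|<p^{2b}\le p^{2a-2}$; combined with $\psi$ being nonlinear when applicable (via Lemma \ref{nonlinear}, if $p^2\notin\cod{G}$) or directly, this should force $\psi$ to have small degree and $G/Z_2(G)$ to be quite small, contradicting that $G/Z_2(G)$ has class $3$ and codegree $p^a$.

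More carefully: since $|G|\le p^{2b-1}$ and $b<a$, we get $|G:Z_2(G)|\le p^{2b-1}/|Z_2(G)|$. For $G/Z_2(G)$ to have an irreducible character of codegree $p^a$ we need $|G:Z_2(G)|\ge p^a$ (a group of order $p^m$ has no codegree exceeding $p^m$), so $p^a\le |G:Z_2(G)|\le p^{2b-1}/|Z_2(G)|$, forcing $|Z_2(G)|\le p^{2b-1-a}$. Since $|Z_2(G)|\ge p^2$ (as $G$ is a $p$-group of class $\ge 2$), we obtain $2b-1-a\ge 2$, i.e.\ $a\le 2b-3$. Now I would push harder on the character $\psi$ of codegree $p^a$: writing $|G:\ker\psi|=\psi(1)p^a\le |G:Z_2(G)|\le p^{2b-1}/|Z_2(G)|\le p^{a}$ (using $|Z_2(G)|\ge p^{2b-1-a}$... ) — here the inequalities must be chased to show $\psi(1)=1$, i.e.\ $\psi$ is linear, which contradicts that $G/Z_2(G)$ has class $3$ (a group of class $\ge 2$ has no faithful linear character, and here $\psi$ restricted to its kernel-quotient would be faithful). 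The case $p^2\in\cod{G}$ (so $b=2$) must be handled too, but then $a\le 2b-3=1$ is already absurd, so that case is immediate.

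The main obstacle I anticipate is the bookkeeping in the case $p^2\notin\cod{G}$ when $\ker\psi$ is nontrivial but the resulting quotient $G/Z_2(G)\ker\psi$-type group still has class $3$: one needs to ensure the chosen $\psi$ is \emph{faithful} on $G/Z_2(G)$ (or pass to a further quotient while keeping codegree $p^a$ and class $3$), so that Lemma \ref{codorder} applies and gives $|G/Z_2(G)| = \psi(1)p^a < p^{2a}$, which combined with $|G/Z_2(G)|\le p^{2b-1}/|Z_2(G)|$ and $b<a$ tightens everything. A clean way around this is to instead take $\psi$ to be a faithful irreducible character of $G/Z_2(G)$ guaranteed by Lemma \ref{minimalfaithful} applied to that quotient (it has class $3\ge 3$ and the right codegree set; its minimality as a class-$3$ example with this codegree set may need a separate minimal-counterexample setup, or one argues directly that $Z_2(G/Z_2(G))$ intersected appropriately is handled by induction). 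I expect that, once $\psi$ is faithful on $G/Z_2(G)$, Lemma \ref{nonlinear} forces $\psi$ nonlinear (hence $\psi(1)\ge p$), Lemma \ref{faithfulp} forces $\cd{G/Z_2(G)}=\{1,p\}$ and an abelian subgroup of index $p$, and then Lemma \ref{1stlem} together with the order bound $|G/Z_2(G)|<p^{2a}$ and the requirement $p^a\mid |G/Z_2(G):\ker\mu|$ for a character $\mu$ of codegree $p^b$ produces the contradiction, exactly as in the final paragraph of the proof of Theorem \ref{class_a}.
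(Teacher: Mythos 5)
Your reduction to the case $\cod{\chi}=p^b$ and the bound $|G|=\chi(1)\cod{\chi}\le p^{2b-1}$ are fine, and the disposal of $b=2$ is correct. But the core of the argument --- the inequality chase in $G/Z_2$ --- does not close. From $p^a\le |G:Z_2|$ you correctly deduce $|Z_2|\le p^{2b-1-a}$ and hence $a\le 2b-3$; but in the next step you write ``$\le p^a$ (using $|Z_2(G)|\ge p^{2b-1-a}$\dots)'', which invokes the \emph{reverse} of the inequality you actually derived. All that follows from $|G|\le p^{2b-1}$ and $|Z_2|\ge p^2$ is $\psi(1)p^a\le p^{2b-3}$, i.e.\ $\psi(1)\le p^{2b-3-a}$, which forces $\psi$ to be linear only in the extremal case $a=2b-3$; for $b+1\le a<2b-3$ (possible once $b\ge 5$) nothing prevents $\psi$ from being nonlinear and non-faithful on $G/Z_2$, and the contradiction evaporates. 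The fallback you suggest --- producing a \emph{faithful} irreducible character of $G/Z_2$ via Lemma~\ref{minimalfaithful} --- is not available either: that lemma concerns a group of minimal order with its class and codegree set, and the quotient $G/Z_2$ of an arbitrary class-$5$ group need not admit any faithful irreducible character.

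The paper avoids all of this by working one step higher, in $G/Z_3$, which has class $2$, so that the exact identity $|G:Z(\varphi)|=\varphi(1)^2$ is available for any nonlinear $\varphi\in\irr{G/Z_3}$. Writing $K=\ker{\varphi}$, $Y=Z(\varphi)$ and $\cod{\varphi}=p^r$, this identity converts the codegree equation into $p^r=\varphi(1)|Y:K|$, and comparing $|G|=\chi(1)p^b\le\chi(1)p^r$ with $|G|=\varphi(1)^2|Y:K||K|$ yields $|K|<|Y:K|$. On the other hand, $p^2\notin\cod{G}$ makes $G/G'$ and $G'/G_3$ elementary abelian, which squeezes $|Y:K|=|Y:G'K|\,|G'K:K|\le p^2$, while $K\ge Z_3$ gives $|K|\ge p^3$ --- a contradiction. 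The two essential ingredients here (the class-$2$ identity for $\varphi(1)^2$, and the elementary-abelian structure of $G/G'$ and $G'/G_3$ bounding $|Y:K|$) are absent from your sketch, and without some substitute for them I do not see how the $G/Z_2$ route can be completed.
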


\begin{proof}
Let \(\chi\in\irr{G}\) be faithful and suppose \(\cod{\chi}=p^b\).   If \(b=2\), then \(|G|\le p^5\) by Lemma \ref{codorder}, which is impossible since \(G\) has nilpotence class 5. Hence \(b\ge 3\) and \(p^2\notin\cod{G}\). Let \(\varphi \in \irr{G/Z_3}\) be nonlinear, and let \(\cod{\varphi}=p^r\). Note that since \(\varphi\) is nonlinear, \(r\) is equal to either \(b\) or \(a\). Put \(\ker{\varphi}=K\) and \(Z(\varphi)=Y\). By definition, \(\varphi(1)p^r=|G:K|\), and since \(G/Z_3\) has nilpotence class 2, \(|G:Y|=\varphi(1)^2\) by Theorem 2.31 of \cite{thebook}. Combining these equations yields \(p^r\varphi(1)=\varphi(1)^2|Y:K|\), and hence \(p^r=\varphi(1)|Y:K|\). Now \(|G|=\chi(1)p^b\le \chi(1)p^r=\chi(1)\varphi(1)|Y:K|\), so \(\chi(1)\varphi(1)\ge |G:Y||K|=\varphi(1)^2|K|\). This shows that \(p^r> \chi(1)\ge \varphi(1)|K|\), so \(\varphi(1)|K|<p^r=\varphi(1)|Y:K|\). Finally, this implies \(|K|<|Y:K|\). 

Since \(G/K\) has class 2, \(G/Y\) is abelian, and hence \(G'\le Y\). Thus \(Y\ge G'K\ge K\), and since \(Y/K\) is cyclic, the quotient \(Y/G'K\) is also cyclic.  Recalling that \(G/G'\) is elementary abelian since \(p^2\notin \cod{G}\), this also shows that \(Y/G'K\) is elementary abelian. Being both cyclic and elementary abelian, the order of \(Y/G'K\) is at most \(p\). Using the fact that \(Y\) contains \(G'\), we have \(G_3=[G',G]\le [Y,G]\le K\). This shows that \(G'\cap K\) contains \(G_3\). Since \(G'/G_3\) is elementary abelian, \(G'/G'\cap K\) is also elementary abelian. By the Diamond Isomorphism Theorem, \(G'/G'\cap K\) is isomorphic to \(G'K/K\), which is then elementary abelian, but also cyclic, since \(Y/K\) is cyclic. This shows that the order of \(G'K/K\) is at most \(p\). We now have \(|Y:K|=|Y:G'K||G'K:K|\le p^2\). Since \(\varphi\) is an irreducible character of \(G/Z_3\), we know that \(p^3\le |Z_3|\le |K|\), but the previous paragraph showed that \(|K|<|Y:K|\), which is at most \(p^2\), a contradiction. Hence, any faithful characters of \(G\) must have codegree \(p^a\).  \end{proof}

Recall for a \(p\)-group with exactly two character degrees that there is no bound on the nilpotence class of \(G\) when \(\cd{G}=\{1,p\}\), and otherwise \(G\) has  nilpotence class at most \(p\). If the group also has exactly four codegrees, then it has nilpotence class at most 4. 


\begin{proof}[Proof of Theorem \ref{tiered class4} \ref{class4_cd2}]
Let \(G\) be a minimal counterexample. By Lemmas \ref{minimalfaithful} and \ref{4cods_faithfulchi_pa}, \(G\) has at least one faithful irreducible character \(\chi\), and such a character must have codegree \(p^a\). Let \(|G|=p^n\), and notice that \(\chi(1)=p^{n-a}\). Since \(c(G/Z)=4\), Theorem 1.2 of \cite{codandnil} implies that \(\cod{G/Z}=\cod{G}\). Let \(\varphi\in \irr{G/Z}\) have codegree \(p^a\). If \(\varphi\) is nonlinear, then \(\varphi(1)=p^{n-a}\), and hence \(|G:\ker{\varphi}|=\cod{\varphi}\varphi(1)=p^ap^{n-a}=p^n=|G|\). This shows that \(\varphi\) is a faithful irreducible character of \(G\), contradicting that the kernel of \(\varphi\) contains \(Z\). Hence \(\varphi\) must be linear. By Lemma \ref{nonlinear}, \(p^2\in\cod{G}\), and by a similar argument, \(p^a=p^3\). By Lemma \ref{codorder}, we have \(|G|<\cod{\chi}^2=p^6\), implying that \(|G|\le p^5\), and hence \(G\) has nilpotence class at most 4. \end{proof}


\begin{proof}[Proof of Theorem \ref{tiered class4} \ref{class4_cd3}]
Let \(G\) be a minimal counterexample. Notice that for the kernel \(K\) of an irreducible character of \(G\), if \(\cod{G/K}=\cod{G}\) then \(\cd{G/K}=\cd{G}\), for otherwise we have a contradiction with Theorem \ref{tiered class4} \ref{class4_cd2}. Since \(|G|\) is minimal, we must have \(c(G/K)<c(G)\), and we can apply Lemmas \ref{minimalfaithful} and \ref{4cods_faithfulchi_pa}.  Now \(c(G)=5\) and \(G\) has a faithful irreducible character \(\chi\) with codegree \(p^a\). By Lemma \ref{faithfulp}, \(\chi\) must have degree \(p^2\), for otherwise \(|\cd{G}|
=\{1,p\}\), a contradiction. This implies that \(p^{a+2}=\chi(1)\cod{\chi}=|G|\), and since \(G\) has class 5, we see that \(a\) is at least 4. 

The quotient \(G/Z_2\) has nilpotence class 3, hence \(\cod{G/Z_2}=\cod{G}\) and we can find \(\gamma\in\irr{G/Z_2}\) with codegree \(p^a\). Since \(a\ge4\) and \(|\cod{G}|=4\), either \(p^2\) or \(p^3\) is not in \(\cod{G}\). Hence \(\gamma\) is nonlinear, and we have \(p^{a+1}\le \gamma(1)\cod{\gamma} \le |G:Z_2|\le p^a\), which is impossible. Thus \(G\) can have nilpotence class at most 4. 
\end{proof}

To complete the proof of Theorem \ref{tiered class4}, we will use the following lemma for maximal class \(p\)-groups.

\begin{lem}\cite[Theorem 2.4]{memax}
\label{maxp3}
If \(G\) is a maximal class \(p\)-group such that \(|G|\ge p^4\), then \(p^3\in\cod{G}\).
\end{lem}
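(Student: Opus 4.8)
The plan is to pass to a quotient of order $p^4$, exhibit there an irreducible character of degree $p$ (which then automatically has codegree $p^3$), and inflate back to $G$. This works because the codegrees of any quotient $G/N$ are among the codegrees of $G$: an irreducible character of $G/N$, viewed in $\irr{G}$, keeps its degree and its kernel in $G$ is the full preimage of its kernel in $G/N$, so it has the same index and the same codegree.

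Write $|G|=p^n$ with $n\ge 4$, so $G$ has nilpotence class $n-1$. I would invoke the standard structure of maximal class $p$-groups: $|G:G'|=p^2$ and $|G_i:G_{i+1}|=p$ for $2\le i\le n-1$, so that $G/G_i$ is again of maximal class of order $p^i$ for each $i$ with $2\le i\le n$. Take $\bar G:=G/G_4$, which is of order $p^4$ and nilpotence class $3$ (when $n=4$ this is just $G$ itself, since then $G_4=1$). In particular $\bar G$ is nonabelian and $|\bar G:Z(\bar G)|\ge p^3$; and if $|Z(\bar G)|\ge p^2$ then $\bar G/Z(\bar G)$ would have order at most $p^2$, hence be abelian, forcing class at most $2$ --- a contradiction. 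So $Z(\bar G)$ is cyclic of order $p$, and therefore $\bar G$ has a faithful irreducible character $\chi$.

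It then remains only to pin down $\chi(1)$. Since $\bar G$ is nonabelian, $\chi$ is nonlinear, so $\chi(1)\ge p$; and $\chi(1)^2\le |\bar G:Z(\bar G)|=p^3$ forces $\chi(1)=p$. Hence $\cod{\chi}=|\bar G:\ker{\chi}|/\chi(1)=p^4/p=p^3$, and inflating $\chi$ to $G$ gives $p^3\in\cod{G}$. The argument is short, and I do not foresee a genuine obstacle beyond (i) handling the boundary case $n=4$ carefully, where $G_4$ is trivial and $\bar G=G$, and (ii) citing the two classical facts about maximal class $p$-groups used above --- that $|G:G'|=p^2$ with unit jumps in the lower central series below $G'$, so that $G/G_4$ is again of maximal class of order $p^4$. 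The essential content is simply the observation that a maximal class $p$-group of order $p^4$ has a faithful irreducible character, necessarily of degree $p$ and codegree $p^3$.
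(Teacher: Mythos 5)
Your argument is correct. Note that the paper itself gives no proof of this statement: it is imported by citation from \cite{memax} (Theorem 2.4 there), so there is no in-paper argument to compare against. Your proof is a clean, self-contained derivation: reduce to $\bar G=G/G_4$, which by the standard index pattern $|G:G'|=p^2$, $|G_i:G_{i+1}|=p$ for a maximal class group is itself of maximal class and order $p^4$; observe $|Z(\bar G)|=p$ (else $\bar G/Z(\bar G)$ would be abelian), so $\bar G$ has a faithful irreducible character $\chi$; then $\chi(1)^2\le|\bar G:Z(\bar G)|=p^3$ forces $\chi(1)=p$ and $\cod{\chi}=p^4/p=p^3$, which inflates to a codegree of $G$. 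Every step is standard and correctly justified, including the compatibility of codegrees with inflation and the fact that a $p$-group with cyclic center has a faithful irreducible character; the boundary case $n=4$ is handled. This is exactly the kind of argument one would expect behind the cited result, and nothing is missing.
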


When the derived subgroup of a \(p\)-group \(G\) has index \(p^2\) in \(G\), some restrictions are placed on the structure of \(G\), including those in the following lemma. In the proof of Theorem \ref{tiered class4} \ref{2gen_class4}, we use these facts to show that if such a group has exactly four codegrees, then it must have nilpotence class at most 4.

\begin{lem}
\label{derived subgrp index p2}
Let \(G\) be a nonabelian \(p\)-group such that \(|G:G'|=p^2\). Then \(|G:G_3|=p^3\), and \(|G:G_4|\le p^5\).
\end{lem}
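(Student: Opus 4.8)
The plan is to work with the lower central series and exploit the hypothesis $|G:G'| = p^2$, which says that $G$ is a $2$-generator $p$-group. First I would recall the standard fact that for any $p$-group, the quotients $G_i/G_{i+1}$ are elementary abelian and are generated (as a group acted on trivially) by the images of commutators $[g_1, \dots, g_i]$ with the $g_j$ ranging over a generating set of $G$; this is a consequence of the fact that $G_{i+1} = [G_i, G]$ together with commutator identities. Since $G$ is generated by two elements, say $x$ and $y$, the quotient $G_2/G_3 = G'/G_3$ is cyclic, generated by the image of $[x,y]$. Because $G$ is nonabelian, $G' > G_3$ is impossible to rule out directly, but $G'/G_3$ being cyclic and $G/G'$ having order $p^2$ forces $|G':G_3| = p$: indeed $G'/G_3$ is a nontrivial cyclic group of exponent $p$ (elementary abelian), hence has order exactly $p$. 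This gives $|G:G_3| = |G:G'|\,|G':G_3| = p^2 \cdot p = p^3$, the first claim.

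For the second claim, I would look at $G_3/G_4 = [G_2, G]/G_4$. The quotient $G_3/G_4$ is elementary abelian and is generated by the images of the basic commutators of weight $3$ in $x$ and $y$, namely $[x,y,x]$ and $[x,y,y]$ (where $[a,b,c]$ means $[[a,b],c]$). Since there are at most two such generators and $G_3/G_4$ is elementary abelian, we get $|G_3:G_4| \le p^2$. Combining with $|G:G_3| = p^3$ yields $|G:G_4| \le p^5$, as desired.

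The one point that needs a little care — and is the main obstacle to making the argument airtight — is justifying that $G_3/G_4$ is spanned by just the two weight-$3$ basic commutators $[x,y,x]$ and $[x,y,y]$. This follows from the theory of the lower central series of a group generated by a set $S$: $G_i/G_{i+1}$ is spanned (mod $G_{i+1}$, writing additively) by the left-normed commutators $[s_1, s_2, \dots, s_i]$ with $s_j \in S$, because $G_{i+1} = [G_i, G]$ and $G_i$ itself is generated mod $G_{i+1}$ by weight-$i$ commutators, and a commutator $[u, s]$ with $u \in G_i$ expands via $[u_1 u_2, s] \equiv [u_1,s][u_2,s] \pmod{G_{i+1}}$. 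One should note that this gives all ordered words $[s_1,\dots,s_i]$ a priori; for weight $3$ with $S = \{x,y\}$ the a priori list is $[x,y,x], [x,y,y], [x,x,y], [x,x,x], \dots$, but $[x,x] = 1$ and $[y,y]=1$, so only $[x,y,x]$ and $[x,y,y]$ survive (and $[x,y,\cdot]$, not $[\cdot,\cdot,\cdot]$ in other orders, since $G_2$ is generated mod $G_3$ by $[x,y]$ alone). Hence at most $p^2$, and the bound follows. I would present this compactly, citing the standard reference for the lower central series of a $d$-generator group rather than reproving the commutator collection, since the bound $|G_i : G_{i+1}| \le p^{\binom{i + d - 1}{i}}$ specialized to $d = 2$ gives $|G_2:G_3| \le p^2$ and $|G_3:G_4| \le p^3$ — though here the finer observation that $G'/G_3$ is cyclic (because it is generated by a single commutator) is what sharpens $|G_2:G_3| \le p^2$ down to exactly $p$.
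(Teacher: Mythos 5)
Your proof is correct and takes essentially the same route as the paper's: \(G=\langle x,y\rangle\) is two-generated, \(G'/G_3=\langle [x,y]G_3\rangle\) has order exactly \(p\), and \(G_3/G_4=\langle [x,y,x]G_4,[x,y,y]G_4\rangle\) has order at most \(p^2\). The only caveat is that your blanket ``standard fact'' that \(G_i/G_{i+1}\) is elementary abelian in every \(p\)-group is false in general (the Heisenberg group over \(\mathbb{Z}/p^2\mathbb{Z}\) has \(G'/G_3\cong\mathbb{Z}/p^2\mathbb{Z}\)); what makes it true here, and what the paper explicitly invokes, is that \(G/G'\) is noncyclic of order \(p^2\) and hence elementary abelian, so that \([x,y]^p\equiv[x^p,y]\equiv 1\pmod{G_3}\) and similarly one step further down.
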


\begin{proof}
Since \(|G:G'|=p^2\), \(G\) is two-generated, and we can write \(G=\langle x,y\rangle\).  Then \(G'/G_3=\langle [x,y]\rangle/ G_3\), and since \(G/G'\) is elementary abelian, \(|G':G_3|=p\).  For \(G_3/G_4\), we have \(G_3/G_4=\langle [x,y,x], [x,y,y]\rangle/G_4\), and hence \(|G_3:G_4|=p\) or \(p^2\). \end{proof}


\begin{proof}[Proof of Theorem \ref{tiered class4} \ref{2gen_class4}]
Let \(G\) be a minimal counterexample. Then \(G\) has nilpotence class 5, and by Lemmas \ref{minimalfaithful} and \ref{4cods_faithfulchi_pa}, \(G\) has a faithful character \(\chi\) with codegree \(p^a\). Let \(\varphi\in \irr{G/G_3}\) be nonlinear. As \(G/G_3\) has class 2, \(G/Z(\varphi)\) is abelian and \(|G:Z(\varphi)|=\varphi(1)^2\). Since \(Z(\varphi)\) contains \(G'\) which has index \(p^2\) in \(G\), we see that \(\varphi(1)=p\). Put \(\cod{\varphi}=p^r\). Then \(p^{r+1}=\cod{\varphi}\varphi(1)=|G:\ker{\varphi}|\le |G:G_3|\). By Lemma \ref{derived subgrp index p2}, \(|G:G_3|=p^3\), which implies \(r=2\) and \(\cod{G/G_3}=\{1,p,p^2\}\). 

The nilpotence class of \(G/G_4\) is 3, which implies that \(\cod{G/G_4}=\{1,p,p^2,p^a\}\). Let \(\theta \in\irr{G/G_4}\) be nonlinear with codegree \(p^a\). Then \(p^{a+1}\le \cod{\varphi}\varphi(1)=|G:\ker{\varphi}|\le |G:G_4|\). By Lemma \ref{derived subgrp index p2}, \(|G:G_4|\le p^5\), which implies \(a\le 4\). Recalling that \(G\) has a faithful character with codegree \(p^a\), we have that \(|G|\le p^7\). If \(|G|=p^6\) , then by Lemma \ref{maxp3}, \(a=3\), implying that \(|G|\) is actually at most \(p^5\), which contradicts that \(G\) has nilpotence class 5. Hence \(|G|=p^7\) and \(a=4\).  

We now have \(|G:G'|=p^2\), \(|G:G_3|=p^3\), and \(|G:G_4|=p^4\) or \(p^5\). If \(|G_5|=p^2\), then \(G/G_5\) has maximal class, and by Lemma \ref{maxp3}, \(p^3\in\cod{G/G_5}\), a contradiction. Hence \(|G_5|=p\).  Suppose \(|G:G_4|=p^4\) and let \(N\) be a normal subgroup of \(G\) such that \(G_5<N<G_4\). Then \(G/N\) has order \(p^5\) and class 4, and we can again obtain a contradiction from Lemma \ref{maxp3}. Now let \(|G:G_4|=p^5\) and let \(N\) be a normal subgroup of \(G\) such that \(G_4<N<G_3\). Then \(G/N\) has order \(p^4\) and class 3, and Lemma \ref{maxp3} yields the desired contradiction. Thus, no minimal counterexample exists. 
\end{proof}

This proves Theorem \ref{class4_coclasses} in the case when \(G\) has coclass 1. Notice that coclass 1 is equivalent to having maximal class, and a \(p\)-group with nilpotence 4 that is also maximal class has order \(p^5\). 
  
\begin{lem}
\label{class4_coclass2}
Let \(G\) be a finite \(p\)-group with \(\cod{G}=\{1,p,p^b,p^a\}\) where \(2\le b<a\). If \(G\) has coclass 2, then the nilpotence class of \(G\) is at most 4, and \(|G|\le p^6\).
\end{lem}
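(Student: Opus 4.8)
The plan is to rule out a minimal counterexample $G$, following the pattern of the preceding proofs. First I would show $c(G)=5$, so that coclass $2$ forces $|G|=p^{7}$. If $c(G)\ge 6$, then $G/Z(G)$ has class $c(G)-1\ge 5$, hence is nonabelian, and by Theorem 1.2 of \cite{codandnil} satisfies $\cod{G/Z(G)}=\cod{G}$; its coclass is $3-\log_p|Z(G)|$, which (being a nonabelian $p$-group) is at least $1$, so $|Z(G)|\le p^{2}$. If $|Z(G)|=p$, then $G/Z(G)$ is a smaller group of coclass $2$ with four codegrees and class exceeding $4$, contradicting minimality; if $|Z(G)|=p^{2}$, then $G/Z(G)$ has coclass $1$, hence is of maximal class and so $|(G/Z(G)):(G/Z(G))'|=p^{2}$, and Theorem \ref{tiered class4} \ref{2gen_class4} forces its class to be at most $4$, a contradiction. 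Thus $c(G)=5$ and $|G|=p^{7}$.

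Next I would produce a faithful character and pass to $G/Z_2$. By Lemmas \ref{minimalfaithful} and \ref{4cods_faithfulchi_pa}, $G$ has a faithful $\chi$ with $\cod{\chi}=p^{a}$, and Theorem \ref{class_aplus1} gives $a\ge 4$. Since $c(G/Z_2)=3$, we have $\cod{G/Z_2}=\cod{G}$, so there is $\gamma\in\irr{G/Z_2}$ with $\cod{\gamma}=p^{a}$. As $|\cod{G}|=4$ and $a\ge 4$, $\gamma$ cannot be linear (a linear character of codegree $p^{a}$ would produce a cyclic quotient of order $p^{a}$, hence the $a+1\ge 5$ distinct codegrees $1,p,\dots,p^{a}$). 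Therefore $p^{a+1}\le\gamma(1)\cod{\gamma}=|G:\ker{\gamma}|\le|G:Z_2|\le p^{5}$, forcing $a=4$; equality throughout then gives $|Z_2|=p^{2}$, $\gamma(1)=p$ and $\ker{\gamma}=Z_2$, so $\gamma$ is a faithful character of $G/Z_2$ and $\cd{G/Z_2}=\{1,p\}$ by Lemma \ref{faithfulp}. Note also $\chi(1)=p^{3}$.

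The case $a=4$ is the crux. If $|G:G'|=p^{2}$, Theorem \ref{tiered class4} \ref{2gen_class4} already bounds the class by $4$; so $|G:G'|=p^{3}$ (a larger abelianization would leave $|G'|$ too small to contain the strictly descending chain $G'>G_3>G_4>G_5>1$), whence the lower central factors are $(p^{3},p,p,p,p)$, so $|G_4|=p^{2}$ and $|G_5|=p$. Since $\chi$ is faithful, $Z(G)$ is cyclic, so $G$ has a unique minimal normal subgroup, of order $p$; with $Z(G)\le Z_2$ and $|Z_2|=p^{2}$ this gives $|Z(G)|=p$. From $c(G/Z_2)=3$ we get $G_4\le Z_2$, hence $G_4=Z_2$ and $G_5=Z(G)$ by comparing orders. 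Applying Lemma \ref{cd 1p} to $G/Z_2$ together with Lemma \ref{1stlem} (in case (i), noting $(G/Z_2)'=G'/Z_2$ has order $p^{2}$) yields $|Z_3(G)|=p^{4}$, so $G/Z_3$ has order $p^{3}$ and class $2$; since every nonabelian group of order $p^{3}$ has codegree set $\{1,p,p^{2}\}$, it follows that $\cod{G}=\{1,p,p^{2},p^{4}\}$, and since $G/Z_2$ has a faithful character, $Z_3/Z_2=Z(G/Z_2)$ is cyclic of order $p^{2}$. From this rigid configuration I would extract a contradiction by analysing $C:=C_G(Z_2)$: it has index $p$ in $G$ (a $p$-group acting faithfully on a group of order $p^{2}$ has order at most $p$, and $[Z_2,G]=G_5\ne 1$), it contains $G'$ (because $[G',Z_2]=[G',G_4]\le G_6=1$), and it is nonabelian of order $p^{6}$ with $Z_2\le Z(C)$ and class at most $4$ (It\^o's theorem~\cite[Theorem 6.15]{thebook} applied to $\chi(1)=p^{3}$ rules out an abelian subgroup of index $p$); playing the lower central series of $C$ against that of $G$, together with $p^{3}\notin\cod{G}$, should overdetermine the structure.

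I expect this last step to be the main obstacle. In contrast with the coclass $1$ case, $G$ has no maximal class quotient of order at least $p^{4}$ — using $G_4=Z_2$ and the cyclicity of $Z_3/Z_2$ one checks that every nontrivial normal subgroup of $G/Z_2$ contains $G_3/Z_2$, while every nontrivial normal subgroup of $G$ contains $Z(G)$ — so Lemma \ref{maxp3} cannot be invoked, and the final contradiction must come either from a direct analysis of $C_G(Z_2)$ and the cyclic section $Z_3/Z_2$, or from an auxiliary codegree result in the spirit of Lemma 2.6 of \cite{mepsquared} applied to a section of order $p^{5}$.
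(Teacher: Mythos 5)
Your argument tracks the paper's proof closely up to the decisive point, and the structural data you extract are correct: $c(G)=5$, $|G|=p^7$, a faithful $\chi$ of codegree $p^a$, $a=4$, $|Z_2|=p^2$, $\cd{G/Z_2}=\{1,p\}$, and $|Z_3|=p^4$ via Lemmas \ref{cd 1p} and \ref{1stlem}. But the proof is not finished: you explicitly defer the contradiction to a speculative analysis of $C_G(Z_2)$ that you acknowledge you cannot complete, so as written this is a genuine gap. The irony is that you are one citation away from closing it. You have already forced $|G:G'|=p^3$, hence $|G'|=p^4=|Z_3|$. The paper's final step is P.~Hall's Theorem 2.47 of \cite{contribution}, which for a group of class $5$ gives the \emph{strict} containment $G'>Z_3$; combined with $|Z_3|=p^4$ this yields $|G'|\ge p^5$, i.e.\ $|G:G'|\le p^2$, contradicting Theorem \ref{tiered class4} \ref{2gen_class4} (equivalently, contradicting your own deduction that $|G:G'|=p^3$). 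No analysis of $C_G(Z_2)$, and no substitute for Lemma \ref{maxp3}, is needed. (The paper reaches the same endpoint slightly differently: it splits on the two cases of Lemma \ref{cd 1p} and shows each forces $|G'|=p^5$, using $|Z_4|=p^5$ from Lemma 2.2 of \cite{mepsquared} along the way; your route through the lower central factors $(p^3,p,p,p,p)$ is a legitimate variant.)

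One smaller point: you invoke Lemma \ref{minimalfaithful} to produce the faithful character, but its hypothesis is that $|G|$ is minimal among \emph{all} groups with four codegrees of class $n$, whereas your $G$ is only minimal among such groups of coclass $2$. The paper instead argues directly: for any kernel $K\ne 1$, either $\cod{G/K}\subsetneq\cod{G}$ (class $\le 2$), or $G/K$ has four codegrees and coclass $1$ or $2$, in which case Theorem \ref{tiered class4} \ref{2gen_class4} or minimality gives $c(G/K)\le 4$; since the kernels intersect trivially this forces a faithful character. Your use of Lemma \ref{4cods_faithfulchi_pa} afterward is fine, as that lemma needs only $c(G)=5$.
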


\begin{proof}
Let \(G\) be a minimal counterexample and suppose \(c(G)=c>5\). The nilpotence class of \(G/Z\) is \(c-1>4\), so \(G/Z\) must be maximal class since \(|G|\) was minimal with coclass 2, but this contradicts Theorem \ref{tiered class4} \ref{2gen_class4}. Hence we may assume \(G\) has nilpotence class 5. 

Suppose \(G\) has no faithful irreducible character. If \(K\) is the kernel of an irreducible character of \(G\), then either \(c(G/K)\le 4\) or \(c(G/K)=5\) and \(G/K\) has maximal class. Since the latter contradicts Theorem \ref{tiered class4} \ref{2gen_class4}, we must have \(c(G/K)\le 4\) for all irreducible characters of \(G\) which contradicts that \(G\) has class 5. Hence \(G\) has a faithful irreducible character \(\chi\), and by Lemma \ref{4cods_faithfulchi_pa}, \(\chi\) has codegree \(p^a\). 

As \(G\) has class 5 and coclass 2, we see that \(|G|=p^7\). If \(|Z_2|=p^3\), then \(G/Z_2\) has class 3 and order \(p^4\), so it has maximal class, and by Lemma  2.3 of \cite{memax} and Lemma \ref{maxp3}, \(\cod{G}=\{1,p,p^2,p^3\}\). Since \(G\) has a faithful character \(\chi\) with codegree \(p^a=p^3\), by Lemma \ref{codorder}, \(|G|\le p^5\), a contradiction. Hence \(|Z_2|=p^2\) and \(|Z|=p\). By Lemma 2.2 of \cite{mepsquared}, we have \(|Z_4|=p^5\).

Let \(\theta\in\irr{G/Z_2}\) have codegree \(p^a\) and notice that \(\theta\) is nonlinear. Thus \(p^{a+1}\le \theta(1)\cod{\theta}\le |G:Z_2|=p^5\). Since \(a\ge4 \), this shows that \(\theta\) is a faithful character of \(G/Z_2\) and has degree \(p\). By Lemma \ref{faithfulp}, \(\cd{G/Z_2}=\{1,p\}\). Now Lemmas \ref{cd 1p} and \ref{1stlem}  imply that either \(|G:Z_3|=p^3\) or \(p^5=|G:Z_2|=p|Z_3:Z_2||G':Z_2|\). Suppose \(|G:Z_3|\ne p^3\). Then \(|Z_3|=p^3\) and \(|G':Z_2|=p^3\), contradicting Theorem \ref{tiered class4} \ref{2gen_class4}. Hence \(|G:Z_3|=p^3\). By \cite{contribution}, Theorem 2.47, \(G'>Z_3\), which forces \(G'\) to have order \(p^5\). This contradicts Theorem \ref{tiered class4} \ref{2gen_class4}, and hence \(G\) must have nilpotence class at most 4. Let \(G\) have order \(p^n\) and nilpotence class \(c\le 4\). Since \(G\) has coclass \(2\), we have \(n-2=c\le 4\), and hence \(|G|\le p^6\). 
\end{proof}

This proves Theorem \ref{class4_coclasses} in the case when \(G\) has coclass 2. The next lemma completes the proof of Theorem \ref{class4_coclasses}.

\begin{lem}
\label{class4_coclass3}
Let \(G\) be a finite \(p\)-group with \(\cod{G}=\{1,p,p^b,p^a\}\) where \(2\le b<a\). If \(G\) has coclass 3, then the nilpotence class of \(G\) is at most 4, and \(|G|\le p^7\).
\end{lem}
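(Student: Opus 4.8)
The plan is to argue by contradiction: take $G$ of minimal order among $p$-groups of coclass $3$ with $\cod{G}=\{1,p,p^b,p^a\}$, $2\le b<a$, and nilpotence class at least $5$. Any counterexample has class at least $5$, since a coclass-$3$ group of class at most $4$ has order $p^{c(G)+3}\le p^7$. First I would reduce to $c(G)=5$. If $c(G)=c\ge 6$, then $G/Z$ has class $c-1\ge 5\ge 3$; since a $p$-group with at most three codegrees has class at most $2$ \cite{codandnil} and $\cod{G/Z}\subseteq\cod{G}$, we get $\cod{G/Z}=\cod{G}$. Writing $|Z|=p^t$, the coclass of $G/Z$ equals $4-t$. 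If $t\ge 2$ this coclass is at most $2$ (coclass $0$ is impossible; coclass $1$ makes $G/Z$ of maximal class with $|G/Z:(G/Z)'|=p^2$, contradicting Theorem~\ref{tiered class4}~\ref{2gen_class4}; coclass $2$ contradicts Lemma~\ref{class4_coclass2}), and if $t=1$ then $G/Z$ has coclass $3$ and smaller order, so $c(G/Z)\le 4$ by minimality; either way we contradict $c(G/Z)\ge 5$. Hence $c(G)=5$ and $|G|=p^8$.

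Next I would produce a faithful irreducible character. If $G$ had none, then for every kernel $K$ of an irreducible character we have $K\neq 1$ and $c(G/K)\le 4$: either $\cod{G/K}\subsetneq\cod{G}$, whence $c(G/K)\le 2$, or $\cod{G/K}=\cod{G}$, in which case $|G/K|\le p^7$ and, were $c(G/K)=5$, the quotient $G/K$ would have coclass at most $2$, contradicting Lemma~\ref{class4_coclass2} or Theorem~\ref{tiered class4}~\ref{2gen_class4}. Then $c(G)\le 4$, a contradiction. So $G$ has a faithful $\chi$; by Lemma~\ref{4cods_faithfulchi_pa}, $\cod{\chi}=p^a$, and $\chi$ is nonlinear since a faithful linear character forces $G$ cyclic. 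Thus $\chi(1)=p^{8-a}$ with $8-a\ge 1$, and Lemma~\ref{codorder} gives $p^8<p^{2a}$; hence $a\in\{5,6,7\}$.

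Then I would pass to $H:=G/Z_2$, which has class $3$ (so $\cod{H}=\cod{G}$) and order at most $p^6$ since $|Z_2|\ge p^2$. As $H$ has class $3$, $|H'|\ge p^2$, so $|H:H'|\le p^4$ and $H$ has no cyclic quotient of order $p^5$ or larger; since $a\ge 5$, any $\theta\in\irr{H}$ with $\cod{\theta}=p^a$ is nonlinear. Writing $|Z_2|=p^t$ ($t\ge 2$) we get $p^{a+1}\le\theta(1)\cod{\theta}=|H:\ker{\theta}|\le|H|=p^{8-t}$, forcing $a=5$, $t=2$ (so $|Z_2|=p^2$, $|Z|=p$), $\ker{\theta}=1$, and $\theta(1)=p$. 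By Lemma~\ref{faithfulp}, $\cd{H}=\{1,p\}$ and $H$ has a normal abelian subgroup of index $p$, so Lemma~\ref{1stlem} gives $p^6=p\,|H'|\,|Z(H)|$, i.e. $|H'|\,|Z_3/Z_2|=p^5$ (using $Z(H)=Z_3/Z_2$). Since $H_3\le Z(H)$ and, by Lemma~\ref{derived subgrp index p2}, $|H'|=p^4$ would force $|H_3|=p^3>|Z(H)|$, we are left with $|H'|\in\{p^2,p^3\}$, $|Z_3|\in\{p^5,p^4\}$, and $G/Z_3$ a class-$2$ group of order $p^3$ or $p^4$, together with further constraints on $|G'|$ from $|G'Z_2/Z_2|=|H'|$ and $|Z_2|=p^2$.

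The last step is the main obstacle. With $|G|=p^8$ several shapes of the lower and upper central series remain admissible, and Lemmas~\ref{cd 1p} and~\ref{1stlem} leave open configurations of $|G'|$, $|Z_2|$, $|Z_3|$. I would treat these by a case analysis mirroring the coclass-$2$ proof of Lemma~\ref{class4_coclass2}: in each configuration one either produces a quotient of $G$ of maximal class and small order whose codegree set — pinned down via Lemma~\ref{maxp3} and the finer maximal-class results of \cite{memax} — is incompatible with $\cod{G}$, or else one forces $|N:N'|=p^2$ for $G$ or for a class-$5$ quotient $N$, contradicting Theorem~\ref{tiered class4}~\ref{2gen_class4}. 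Care is needed with the value of $b$ (when $b=3$ the codegree $p^3$ is available, so the crude maximal-class contradiction fails and one must use the sharper codegree information for maximal-class groups), and I would expect to invoke the auxiliary bounds on $|Z_2|$ and $|Z_4|$ from \cite{mepsquared} to prune the remaining cases. Once no minimal counterexample survives, $c(G)\le 4$, and then $|G|=p^{c(G)+3}\le p^7$.
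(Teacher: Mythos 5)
Your reductions are sound and track the paper's through the first two thirds of the argument: the descent to $c(G)=5$ and $|G|=p^8$, the production of a faithful $\chi$ with $\cod{\chi}=p^a$, and the analysis of $H=G/Z_2$ forcing $a=5$, $|Z_2|=p^2$, $|Z|=p$, $\cd{H}=\{1,p\}$, and $p^6=p\,|H'|\,|Z(H)|$ all match (your use of $|H'|\ge p^2$ to get nonlinearity, and of $H_3\le Z(H)$ to kill $|H'|=p^4$, are legitimate minor variants). But the proof is not finished there, and the part you defer --- ``the last step is the main obstacle'' --- is precisely where the actual work lies; what you sketch for it would not close the argument. The tools you propose (exhibiting a maximal-class quotient with an incompatible codegree set, or forcing $|N:N'|=p^2$ in some quotient to contradict Theorem~\ref{tiered class4}~\ref{2gen_class4}) are the ones that settle coclass $2$, but at $|G|=p^8$ with $|G:G'|\ge p^3$ they do not by themselves eliminate the surviving configuration, and you have not verified that they do.

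The paper closes the gap with a different and quite specific chain. First, Theorem 1.1 of \cite{mepsquared} pins $b=2$, and Hall's Theorem 2.47 of \cite{contribution} gives $G'>Z_3$; combined with $|G:G'|\ge p^3$ and your equation $p^6=p|Z_3/Z_2||G'/Z_2|$ this forces $|G'|=p^5$ and $|Z_3|=p^4$ (your list still allows $|Z_3|=p^5$, which must be excluded exactly this way). Next, Lemma 12.12 of \cite{thebook} applied to an abelian subgroup $A/Z_3$ of index $p$ in $G/Z_3$ yields $|Z_4|=p^6$. Then a commutator--exponent computation shows $G_3/Z_2$ is elementary abelian: for $x\in G'\le Z_4$ one has $[x,g]\in Z_3$ and $x^p\in Z_3$ (since $|G':Z_3|=p$), so $[x,g]^p\in Z_2$; as $G_3/Z_2$ sits inside the cyclic group $Z_3/Z_2$ (cyclic because $G/Z_2$ has a faithful character), one gets $|G_3|=p^3$. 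Finally, writing $Z_3=\langle a,Z_2\rangle$ so that $G_3=\langle a^p,Z_2\rangle$, the relation $\overline{[a^p,g]}=\overline{[a,g]^p}=\overline{1}$ in $G/Z$ puts $a^p$ in $Z_2$, contradicting $G_3\ne Z_2$. None of this is a routine case check against maximal-class codegree data, so as written your proposal has a genuine gap at the decisive step.
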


\begin{proof}
Let \(G\) be a minimal counterexample and suppose \(c(G)=c>5\). The nilpotence class of \(G/Z\) is \(c-1>4\), so \(G/Z\) must be maximal class or have coclass 2 since \(|G|\) was minimal with coclass 3, but this contradicts Theorem \ref{tiered class4} \ref{2gen_class4} and Lemma \ref{class4_coclass2}. Hence we may assume \(G\) has nilpotence class 5. 

Suppose \(G\) has no faithful irreducible character. If \(K\) is the kernel of an irreducible character of \(G\), then either \(c(G/K)\le 4\), or \(c(G/K)=5\) and \(G/K\) has maximal class or coclass 2, contradicting Theorem \ref{tiered class4} \ref{2gen_class4} and Lemma  \ref{class4_coclass2}, respectively. Hence we must have \(c(G/K)\le 4\) for all irreducible characters of \(G\), which contradicts that \(G\) has class 5. Thus \(G\) has a faithful irreducible character \(\chi\), and by Lemma \ref{4cods_faithfulchi_pa}, \(\chi\) has codegree \(p^a\).

As \(G\) has coclass 3 and nilpotence class 5, we have \(|G|=p^8\), and by Lemma \ref{codorder}, \(5\le a\le 7\). If \(a=7\), then \(\chi(1)=p\). By Lemma \ref{faithfulp} and Theorem \ref{tiered class4} \ref{class4_cd2}, this is impossible, so \(a\) is at most 6. Theorem 1.1 of \cite{mepsquared} implies that \(\cod{G}=\{1,p,p^2,p^a\}\). 

Suppose \(p^2\le |Z|\le p^3\). Let \(\theta\in\irr{G/Z}\) have codegree \(p^a\). Since \(a\) is at least 5, \(\theta\) is nonlinear, and \(p^{a+1}\le \theta(1)\cod{\theta}\le |G:Z|\le p^6\). This shows that \(a=5\) and \(|Z|=p^2\). By \cite{contribution}, Theorem 2.47, \(G_4>Z\) and hence \(|G:G_4|\le p^5\). Since \(c(G/G_4)=3\), \(\cod{G/G_4}=\{1,p,p^2,p^5\}\), which is impossible. Hence \(Z=G_5\) has order \(p\). 

 The quotient \(G/Z_2\) has class 3 and hence \(p^a\in \cod{G/Z_2}\). As \(|G:Z_2|\le p^6\) and \(p^a=p^5\) or \(p^6\) is the codegree of a nonlinear character of \(G/Z_2\), we must have \(|G:Z_2|=p^6\) and \(a=5\). Let \(\gamma\in\irr{G/Z_2}\) have codegree \(p^5\). Then \(\gamma\) is nonlinear, so \(p^6\le \gamma(1)\cod(\gamma)\le |G:Z_2|=p^6\), which shows that \(\gamma\) is a faithful character of \(G/Z_2\) with degree \(p\). By Lemma \ref{faithfulp}, this implies that \(\cd{G/Z_2}=\{1,p\}\).  

Theorem 2.47 of \cite{contribution} implies that \(G'> Z_3\), and by Theorem \ref{tiered class4} \ref{2gen_class4} we may assume \(|G:G'|\ge p^3\). Hence \(p^4\le |G'|\le p^5\) and \(|Z_3|\le p^4\). Since \(|G:Z_3|\ge p^4\), Lemma \ref{cd 1p} implies that \(G/Z_2\) has an abelian subgroup of index \(p\). By Lemma \ref{1stlem}, we have \(p^6=|G/Z_2|=p|Z_3/Z_2||G'/Z_2|\) which shows that \(|G'|=p^5\) and \(|Z_3|=p^4\). As \(G/Z_2\) has an abelian subgroup of index \(p\), \(G/Z_3\) also has such a subgroup \(A/Z_3\), and Lemma 12.12 of \cite{thebook} states that \(p^3=|A/Z_3|=|G'/Z_3||(A/Z_3)\cap (Z_4/Z_3)|\). Hence \(|(A/Z_3)\cap (Z_4/Z_3)|=p^2\), which shows that \(|Z_4|=p^6\).

Let \(x\in G'\). Then \(z\in Z_4\) implies that \([x,g]\in Z_3\) for all \(g\in G\). Since \(|G':Z_3|=p\), \(x^p\in Z_3\), and hence passing to \(G/Z_2\), we have \(\overline{1}=\overline{[x^p,g]}=\overline{[x,g]^p}\). This shows that \([x,g]^p\in Z_2\) for all \(x\in G'\) and \(g\in G\), and we conclude that \(G_3/Z_2\) is elementary abelian. Recall that \(G/Z_2\) has a faithful character, and hence \(Z_3/Z_2\) is cyclic. Thus \(|G_3|=p^3\). 

Let \(Z_3=\langle a,Z_2\rangle\) and observe that \(G_3=\langle a^p, Z_2\rangle\). Then \([a,g]\in Z_2\), and passing to \(G/Z\) we have \(\overline{[a^p,g]}=\overline{[a,g]^p}=\overline{1}\), which shows that \(\overline{a^p}\) commutes with all \(\overline{g}\in G/Z\), and hence \(a^p\in Z_2\), a contradiction since \(G_3\ne Z_2\).  Therefore \(G\) must have nilpotence class at most 4. Since \(G\) has coclass \(3\), we have \(n-3=c\le 4\), and hence \(|G|\le p^7\). 
\end{proof}

For the remainder of this section we will consider only those groups which satisfy Hypothesis \((\ast)\). Together, Lemmas \ref{star_class4_coclass4} - \ref{star_class4_coclass7} prove Theorem \ref{star_class4_coclasses}.

\begin{lem}
\label{star_class4_coclass4}
Let \(G\) be a finite \(p\)-group with \(\cod{G}=\{1,p,p^b,p^a\}\) where \(2\le b<a\). If \(G\) has coclass 4, and \(G\) and all of its quotients satisfy Hypothesis \((\ast)\), then the nilpotence class of \(G\) is at most 4 and \(|G|\le p^8\).
\end{lem}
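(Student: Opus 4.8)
The plan is to follow the minimal-counterexample scheme of Lemmas \ref{class4_coclass2} and \ref{class4_coclass3}, raising all the order estimates by one and invoking Hypothesis \((\ast)\) in place of the more elementary bounds that are available at orders \(p^7\) and \(p^8\). So let \(G\) be a counterexample of least order. First I would show \(c(G)=5\): if \(c(G)\ge 6\), choose \(N\le Z(G)\) with \(|N|=p\); then \(G/N\) has class at least \(5\) and coclass at most \(4\), and every quotient of \(G/N\) still satisfies Hypothesis \((\ast)\). If \(\cod{G/N}=\cod{G}\) then either the coclass of \(G/N\) is still \(4\), contradicting minimality, or it has dropped to \(1\), \(2\), or \(3\), contradicting Theorem \ref{tiered class4} \ref{2gen_class4} (maximal class), Lemma \ref{class4_coclass2}, or Lemma \ref{class4_coclass3}; if \(\cod{G/N}\subsetneq\cod{G}\) then \(c(G/N)\le 2\) by \cite{codandnil}, which is absurd. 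Hence \(c(G)=5\), and since the coclass is \(4\) we get \(|G|=p^{9}\). The usual intersection-of-kernels argument then shows \(G\) has a faithful irreducible character \(\chi\): any nontrivial quotient of class \(5\) has coclass at most \(3\) and so contradicts the results above, so every proper quotient has class at most \(4\), and \(\bigcap\ker{\chi}=1\) would force \(c(G)\le 4\). By Lemma \ref{4cods_faithfulchi_pa}, \(\cod{\chi}=p^{a}\).

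Next I would pin down \(a\). Lemma \ref{codorder} gives \(p^{9}=\chi(1)p^{a}<p^{2a}\), so \(a\ge 5\), while \(\chi\) is nonlinear, so \(\chi(1)=p^{9-a}\ge p\) and \(a\le 8\). If \(a=8\) then \(\chi(1)=p\), and Lemma \ref{faithfulp} gives \(\cd{G}=\{1,p\}\), contradicting Theorem \ref{tiered class4} \ref{class4_cd2}. If \(a=7\) then \(\chi(1)=p^{2}\); the quotient \(G/Z_2(G)\) has class \(3\) and order at most \(p^{7}\) (since \(|Z_2(G)|\ge p^{2}\)), hence the same codegrees as \(G\), and a character of codegree \(p^{7}\) there is forced to be faithful and linear, making \(G/Z_2(G)\) cyclic --- impossible. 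So \(a\in\{5,6\}\), and Theorem 1.1 of \cite{mepsquared} gives \(\cod{G}=\{1,p,p^{2},p^{a}\}\); in particular \(G/G'\) has exponent at most \(p^{2}\), since an element of order \(p^{3}\) in \(G/G'\) would produce a linear character of codegree \(p^{3}\).

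Then I would bound the centre and the relevant central terms. In \(G/Z(G)\) (class \(4\), same codegrees) a character of codegree \(p^{a}\) must be nonlinear because \(G/G'\) has exponent at most \(p^{2}\), so \(p^{a+1}\le |G:Z(G)|\) and hence \(|Z(G)|\le p^{3}\); applying Theorem 2.47 of \cite{contribution} (which gives \(G_4>Z(G)\)) to the class-\(3\) quotients \(G/G_4\) rules out \(|Z(G)|=p^{2}\) and \(|Z(G)|=p^{3}\) exactly as in Lemma \ref{class4_coclass3}, so \(|Z(G)|=p\) and \(Z(G)=G_5\). Now \(|G/Z(G)|=p^{8}=p^{2\cdot 4}\), so Hypothesis \((\ast)\) applied to the class-\(4\) group \(G/Z(G)\) yields \(|Z_3(G)|\ne p^{3}\), and once \(|Z_2(G)|\) is seen to be small --- a nonlinear character of codegree \(p^{a}\) in the class-\(3\) quotient \(G/Z_2(G)\) forces \(|Z_2(G)|\in\{p^{2},p^{3}\}\) --- Hypothesis \((\ast)\) applied to \(G/Z_2(G)\), which then has order at least \(p^{6}\), constrains \(|Z_4(G)|\). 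With these constraints in hand I would finish along the lines of Lemma \ref{class4_coclass3}: a character of codegree \(p^{a}\) in \(G/Z_2(G)\) is faithful of degree \(p\), so \(\cd{G/Z_2(G)}=\{1,p\}\) by Lemma \ref{faithfulp}; Lemmas \ref{cd 1p} and \ref{1stlem} then give an abelian subgroup of index \(p\) in \(G/Z_2(G)\) together with \(|G:Z_2(G)|=p\,|Z_3(G):Z_2(G)|\,|G':Z_2(G)|\); Lemma 12.12 of \cite{thebook} and Theorem 2.47 of \cite{contribution} pin down \(|Z_3(G)|\), \(|Z_4(G)|\), and \(|G_3|\); and a commutator computation in \(G/Z(G)\) --- showing that a generator of \(G_3\) modulo \(Z_2(G)\) would have to lie in \(Z_2(G)\) --- gives the final contradiction. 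Once the class is known to be at most \(4\), coclass \(4\) forces \(|G|\le p^{8}\).

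The main obstacle is this last step: at order \(p^{9}\) there is more room than in the coclass-\(2\) and coclass-\(3\) cases, so several sub-cases for the tuple \((|Z_2(G)|,|Z_3(G)|,|G'|)\) survive the first rounds of estimates, and Hypothesis \((\ast)\) is precisely what is needed to collapse the extra ones. Making the case analysis terminate in every branch --- rather than finding one decisive inequality --- is where the real work lies.
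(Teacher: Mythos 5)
Your reduction to a class-\(5\) counterexample of order \(p^{9}\) with a faithful irreducible character of codegree \(p^{a}\), and your elimination of \(a=8\) and \(a=7\), match the paper. After that, however, the proposal stops being a proof: you concede that several sub-cases for \((|Z_2(G)|,|Z_3(G)|,|G'|)\) ``survive the first rounds of estimates'' and that making the case analysis terminate is ``where the real work lies,'' and indeed no contradiction is ever actually derived. Moreover, one step you do assert is wrong as stated: ruling out \(p^{2}\le |Z(G)|\le p^{3}\) ``exactly as in Lemma \ref{class4_coclass3}'' breaks down in the sub-case \(a=5\), \(|Z(G)|=p^{2}\), because at order \(p^{9}\) Hall's theorem only yields \(|G:G_{4}|\le p^{6}=p^{a+1}\), which is perfectly compatible with a nonlinear character of codegree \(p^{5}\) in \(G/G_{4}\); the extra order of magnitude that closes this argument at \(p^{8}\) is exactly what leaves it open at \(p^{9}\).

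The missing idea is the paper's single decisive use of Hypothesis \((\ast)\): it is invoked to conclude \(|Z_{2}(G)|\ne p^{2}\), hence \(|G:Z_{2}(G)|\le p^{6}\). Combined with \(a\ge 5\) from Lemma \ref{codorder} and the fact that a codegree-\(p^{a}\) character of the class-\(3\) quotient \(G/Z_{2}(G)\) is nonlinear, this forces \(a=5\), \(|Z_{2}(G)|=p^{3}\), and \(|G:Z_{2}(G)|=p^{6}\) simultaneously, so no residual branching remains. The endgame is then short: the codegree-\(p^{5}\) character of \(G/Z_{2}(G)\) is faithful of degree \(p\), so \(Z_{3}/Z_{2}\) is cyclic; Lemmas \ref{faithfulp} and \ref{1stlem}, together with \(G'>Z_{3}\) (Theorem 2.47 of \cite{contribution}) and \(|G:G'|\ge p^{3}\) (Theorem \ref{tiered class4} \ref{2gen_class4}), give \(|G'|=p^{6}\) and \(|Z_{3}|=p^{5}\), so \(Z_{3}/Z_{2}\) is cyclic of order \(p^{2}\); meanwhile the commutator identity \([x^{p},g]=[x,g]^{p}\) shows \(Z_{2}/Z\) and then \(Z_{3}/Z_{2}\) are elementary abelian, a contradiction. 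You gesture at most of these ingredients but never assemble them, and you aim \((\ast)\) at the wrong quotients (at \(G/Z\) to constrain \(Z_{3}\), and at \(G/Z_{2}\) to constrain \(Z_{4}\), rather than to force \(|Z_{2}(G)|\ne p^{2}\)), which is precisely why your branches do not collapse.
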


\begin{proof}
Let \(G\) satisfy \((\ast)\) and let \(|G|\) be minimal such that \(G\) has coclass 4 and \(|\cod{G}|=4\). Suppose \(c(G)=c>5\). The nilpotence class of \(G/Z\) is \(c-1>4\), so \(G/Z\) has coclass at most 3 since \(|G|\) was minimal with coclass 4, but this contradicts Theorem \ref{tiered class4} \ref{2gen_class4}, Lemma \ref{class4_coclass2}, and Lemma \ref{class4_coclass3}. Hence we may assume \(G\) has nilpotence class 5. 

Suppose \(G\) has no faithful irreducible character. If \(K\) is the kernel of an irreducible character of \(G\), then either \(c(G/K)\le 4\), or \(c(G/K)=5\) and \(G/K\) has coclass at most 3, contradicting Theorem \ref{tiered class4} \ref{2gen_class4}, Lemma \ref{class4_coclass2}, and Lemma \ref{class4_coclass3}. Hence we must have \(c(G/K)\le 4\) for all irreducible characters of \(G\), which contradicts that \(G\) has class 5. Thus \(G\) has a faithful irreducible character \(\chi\), and by Lemma \ref{4cods_faithfulchi_pa}, \(\chi\) has codegree \(p^a\). By Lemma  \ref{codorder}, \(a\ge 5\). As \(c(G/Z_2)=3\), we have \(p^a\in\cod{G/Z_2}\), and since \(G\) satisfies \((\ast)\), the order of \(G/Z_2\) is at most \(p^6\), which shows that \(a\le 5\). Hence \(a=5\) which forces \(Z_2\) to have order \(p^3\).

Let \(\gamma\in\irr{G/Z_2}\) have codegree \(p^5\) and notice that \(\gamma\) is a faithful character of \(G/Z_2\) with degree \(p\). By Theorem \ref{tiered class4}  \ref{2gen_class4}, \(G'\) has order at most \(p^6\), and \(G'>Z_3\) by \cite{contribution}, Theorem 2.47. Thus Lemmas \ref{faithfulp} and \ref{1stlem} imply that \(|G/Z_2|=p^6=p|G'/Z_2||Z_3/Z_2|\). This shows that \(|G'|=p^6\) and \(|Z_3|=p^5\).

Suppose \(Z_2/Z\) has exponent greater than \(p\). Then there exists \(x\in Z_2\) such that \(x^p\notin Z\), and \(g\in G\) such that \([x^p,g]\ne 1\). Note that this also implies \(|Z|=p\). Since \([x,g]\in Z\), we have \([x^p,g]=[x,g]^p=1\), as \(|Z|=p\), a contradiction. Hence \(Z_2/Z\) is elementary abelian. Repeating this argument in the quotient group \(G/Z\), we conclude that \(Z_3/Z_2\) is also elementary abelian, which contradicts that the center of \(G/Z_2\), \(Z_3/Z_2\), is cyclic with order \(p^2\). Hence \(G\) must have nilpotence class at most 4. Since \(G\) has coclass \(4\), we have \(n-4=c\le 4\), and hence \(|G|\le p^8\).  \end{proof}

\begin{lem}
\label{star_class4_coclass5}
Let \(G\) be a finite \(p\)-group with \(\cod{G}=\{1,p,p^b,p^a\}\) where \(2\le b<a\). If \(G\) has coclass 5, and \(G\) and all of its quotients satisfy Hypothesis \((\ast)\), then the nilpotence class of \(G\) is at most 4 and \(|G|\le p^9\).
\end{lem}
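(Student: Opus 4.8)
The plan is to follow the template of Lemmas \ref{class4_coclass2}, \ref{class4_coclass3}, and \ref{star_class4_coclass4}, now with \(|G|=p^{10}\). Let \(G\) be a minimal counterexample: among \(p\)-groups of coclass \(5\) with \(\cod{G}=\{1,p,p^b,p^a\}\), all of whose quotients satisfy \((\ast)\), it is one of smallest order having nilpotence class at least \(5\). First I would rule out \(c(G)=c>5\): then \(G/Z\) has class \(c-1>4\) and coclass at most \(5\), and either this coclass is at most \(4\), contradicting Theorem \ref{tiered class4} \ref{2gen_class4} together with Lemmas \ref{class4_coclass2}, \ref{class4_coclass3}, and \ref{star_class4_coclass4}, or it equals \(5\), so that \(G/Z\) is a smaller counterexample. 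Hence \(c(G)=5\) and \(|G|=p^{10}\). Running the same dichotomy on the quotients \(G/K\), for \(K\) a kernel of an irreducible character of \(G\), shows that \(c(G/K)\le 4\) for all such \(K\) would force \(c(G)\le 4\); so \(G\) has a faithful irreducible character \(\chi\), and by Lemma \ref{4cods_faithfulchi_pa}, \(\cod{\chi}=p^a\).

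Next I would pin down the parameters. Lemma \ref{codorder} gives \(p^{10}=\chi(1)p^a<p^{2a}\), so \(a\ge 6\). Since \(|G|=p^{10}=p^{2\cdot 5}\), Hypothesis \((\ast)\) applied to \(G\) forces \(|Z_2|\ne p^2\), hence \(|Z_2|\ge p^3\) and \(|G/Z_2|\le p^7\). The quotient \(G/Z_2\) has class \(3\), so \(\cod{G/Z_2}=\cod{G}\); picking \(\gamma\in\irr{G/Z_2}\) of codegree \(p^a\) gives \(\gamma(1)p^a\le|G/Z_2|\le p^7\), forcing \(a\le 7\), and \(a=7\) would make \(\gamma\) a faithful linear character of the class-\(3\) group \(G/Z_2\), which is absurd. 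Hence \(a=6\), \(\chi(1)=p^4\), and \(\gamma(1)\le p\); a linear \(\gamma\) is impossible, since it would force \(|(G/Z_2)'|\le p\) against \(G/Z_2\) having class \(3\). So \(\gamma(1)=p\), \(|G/Z_2|=p^7\), \(|Z_2|=p^3\) (whence \(|Z|\le p^2\)), and \(\gamma\) is a faithful character of degree \(p\) on \(G/Z_2\); by Lemma \ref{faithfulp}, \(\cd{G/Z_2}=\{1,p\}\) and \(G/Z_2\) has a normal abelian subgroup of index \(p\). (If convenient, Theorem 1.1 of \cite{mepsquared} gives \(\cod{G}=\{1,p,p^2,p^6\}\) here, as in Lemma \ref{class4_coclass3}.)

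Now I would read off the structure. Since \(Z_3<G'\) by Theorem 2.47 of \cite{contribution}, we have \(Z_2\le G'\), so Lemma \ref{1stlem} applied to \(G/Z_2\) reads \(p^7=p\,|G'/Z_2|\,|Z_3/Z_2|\), that is \(|G'|\,|Z_3|=p^{12}\). Together with \(Z_2<Z_3\le G'\) this leaves \(|Z_3|\in\{p^4,p^5,p^6\}\); but \(|Z_3|=p^4\) gives \(|G:G'|=p^2\), contradicting Theorem \ref{tiered class4} \ref{2gen_class4}, and \(|Z_3|=p^5\) gives \(|Z_3/Z_2|=p^2\), contradicting \((\ast)\) applied to the class-\(3\) group \(G/Z_2\) (of order \(p^7\ge p^6\)). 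Hence \(|Z_3|=p^6\), \(|G'|=p^6\), and \(Z_3/Z_2=Z(G/Z_2)\) is cyclic of order \(p^3\), since \(\gamma\) is faithful. Finally I would run the commutator argument of Lemmas \ref{class4_coclass3} and \ref{star_class4_coclass4}: from \(|Z|\le p^2\) and \(|Z_2|=p^3\) we have \(|Z_2/Z|\le p^2\); using the identity \([x^p,g]=[x,g]^p\), valid whenever \([x,g]\) is central, one shows first (when \(|Z|=p\)) that \(Z_2/Z\) has exponent \(p\), and then the same argument inside \(G/Z\) --- in either case for \(|Z|\) --- shows \(Z_3/Z_2\) has exponent \(p\), contradicting that \(Z_3/Z_2\) is cyclic of order \(p^3\). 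Thus no counterexample exists, \(c(G)\le 4\), and coclass \(5\) forces \(|G|\le p^9\).

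The step I expect to be the main obstacle is pinning the central orders \(|Z|\), \(|Z_2|\), \(|Z_3|\) down tightly enough for the commutator/exponent argument to close uniformly: that argument propagates an exponent-\(p\) conclusion from one central layer to the next only because each intermediate quotient it invokes has exponent \(p\), and checking this needs all three orders. This is exactly why the earlier steps must press Hypothesis \((\ast)\) --- applied to \(G\), to \(G/Z_2\), and if needed to \(G/Z\) --- as hard as possible; any residual ambiguity in those orders would create extra sub-cases, which one would kill by maximal-class arguments (Lemma \ref{maxp3}) on suitable small quotients, just as in the coclass \(\le 3\) lemmas.
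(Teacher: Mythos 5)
Your overall architecture matches the paper's: minimal counterexample, reduction to class $5$ and $|G|=p^{10}$, a faithful character of codegree $p^a$, Hypothesis $(\ast)$ forcing $|Z_2|=p^3$ and $a=6$, a faithful degree-$p$ character of $G/Z_2$, Lemma \ref{1stlem} giving $|G'|\,|Z_3|=p^{12}$, and the exponent-$p$ argument on upper central factors for the final contradiction. But there is a genuine error in your determination of $|Z_3|$. You exclude $|Z_3|=p^5$ by claiming that $|Z_3/Z_2|=p^2$ contradicts Hypothesis $(\ast)$ applied to $G/Z_2$. Hypothesis $(\ast)$ constrains the \emph{second} center: for $H=G/Z_2$ of class $3$ and order $p^7\ge p^6$ it says $|Z_2(H)|=|Z_4/Z_2|\ne p^2$, i.e.\ $|Z_4|\ne p^5$; it says nothing about $Z(H)=Z_3/Z_2$. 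In fact $|Z_3|=p^5$ is exactly what occurs: Theorem \ref{tiered class4} \ref{2gen_class4} gives $|G'|\le p^7$ and Theorem 2.47 of \cite{contribution} gives the strict containment $G'>Z_3$, so $|G'|\,|Z_3|=p^{12}$ forces $|G'|=p^7$ and $|Z_3|=p^5$, which is what the paper concludes. Your surviving case $|Z_3|=|G'|=p^6$ is therefore both wrongly derived and inconsistent with the strict containment you yourself invoke.

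The damage is local, however. In the correct case $Z_3/Z_2=Z(G/Z_2)$ is cyclic (as $\gamma$ is faithful) of order $p^2$, and your closing argument --- $Z_2/Z$ and then $Z_3/Z_2$ have exponent $p$, contradicting cyclicity of order at least $p^2$ --- applies verbatim. So the proof is repaired by deleting the faulty case analysis: all the final step needs is that $Z_3/Z_2$ is cyclic of order at least $p^2$, which follows directly from $|G'|\le p^7$ and $|G'|\,|Z_3|=p^{12}$.
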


\begin{proof}
Let \(G\) satisfy \((\ast)\) and let \(|G|\) be minimal such that \(G\) has coclass 5 and \(|\cod{G}|=4\). Suppose \(c(G)=c>5\). The nilpotence class of \(G/Z\) is \(c-1>4\), so \(G/Z\) has coclass at most 4 since \(|G|\) was minimal with coclass 5, but this contradicts Theorem \ref{tiered class4} \ref{2gen_class4}, Lemma \ref{class4_coclass2}, Lemma \ref{class4_coclass3}, and Lemma \ref{star_class4_coclass4}. Hence we may assume \(G\) has nilpotence class 5. 

Suppose \(G\) has no faithful irreducible character. If \(K\) is the kernel of an irreducible character of \(G\), then either \(c(G/K)\le 4\), or \(c(G/K)=5\) and \(G/K\) has coclass at most 4, contradicting Theorem \ref{tiered class4}  \ref{2gen_class4}, Lemma \ref{class4_coclass2}, Lemma \ref{class4_coclass3}, and Lemma \ref{star_class4_coclass4}. Hence we must have \(c(G/K)\le 4\) for all irreducible characters of \(G\), which contradicts that \(G\) has class 5. Thus \(G\) has a faithful irreducible character \(\chi\), and by Lemma \ref{4cods_faithfulchi_pa}, \(\chi\) has codegree \(p^a\). Since \(G\) has class 5 and coclass 5, we have \(|G|=p^{10}\), and by Lemma \ref{codorder}, \(a\ge 6\). As \(c(G/Z_2)=3\), we have \(p^a\in\cod{G/Z_2}\), and since \(G\) satisfies \((\ast)\), the order of \(G/Z_2\) is at most \(p^7\), which shows that \(a\le 6\). Hence \(a=6\) which forces \(Z_2\) to have order \(p^3\).

Let \(\gamma\in\irr{G/Z_2}\) have codegree \(p^6\) and notice that \(\gamma\) is a faithful character of \(G/Z_2\) with degree \(p\). By Theorem \ref{tiered class4}  \ref{2gen_class4}, \(G'\) has order at most \(p^7\), and \(G'>Z_3\) by \cite{contribution}, Theorem 2.47. Thus Lemmas \ref{faithfulp} and \ref{1stlem} imply that \(|G/Z_2|=p^7=p|G'/Z_2||Z_3/Z_2|\). This shows that \(|G'|=p^7\) and \(|Z_3|=p^5\).

Suppose \(Z_2/Z\) has exponent greater than \(p\). Then there exists \(x\in Z_2\) such that \(x^p\notin Z\), and \(g\in G\) such that \([x^p,g]\ne 1\). Note that this also implies \(|Z|=p\). Since \([x,g]\in Z\), we have \([x^p,g]=[x,g]^p=1\), as \(|Z|=p\), a contradiction. Hence \(Z_2/Z\) is elementary abelian. Repeating this argument in the quotient group \(G/Z\), we conclude that \(Z_3/Z_2\) is also elementary abelian, which contradicts that the center of \(G/Z_2\), \(Z_3/Z_2\), is cyclic with order \(p^2\). Hence \(G\) can have nilpotence class at most 4. Since \(G\) has coclass \(5\), we have \(n-5=c\le 4\), and hence \(|G|\le p^9\). 
\end{proof}

\begin{lem}
\label{star_class4_coclass6}
Let \(G\) be a finite \(p\)-group with \(\cod{G}=\{1,p,p^b,p^a\}\) where \(2\le b<a\). If \(G\) has coclass 6, and \(G\) and all of its quotients satisfy Hypothesis \((\ast)\), then the nilpotence class of \(G\) is at most 4 and \(|G|\le p^{10}\).
\end{lem}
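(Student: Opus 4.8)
The plan is to mirror the minimal-counterexample scheme used for Lemmas \ref{class4_coclass3}, \ref{star_class4_coclass4} and \ref{star_class4_coclass5}. Let $G$ be a counterexample of least order, so $\cod{G}=\{1,p,p^b,p^a\}$, $G$ has coclass $6$, $G$ and all of its quotients satisfy $(\ast)$, and every group of smaller order meeting these hypotheses has nilpotence class at most $4$. First I would reduce to $c(G)=5$: if $c(G)=c>5$ then $G/Z$ has class $c-1>4$, and since $c(G/Z)=c-1$ while $|Z|\ge p$, the coclass of $G/Z$ is at most that of $G$, namely $6$; equality would make $G/Z$ a smaller counterexample, so $G/Z$ has coclass at most $5$, whence Theorem \ref{tiered class4} \ref{2gen_class4} together with Lemmas \ref{class4_coclass2}, \ref{class4_coclass3}, \ref{star_class4_coclass4} and \ref{star_class4_coclass5} forces $c(G/Z)\le 4$, a contradiction. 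Hence $c(G)=5$, and as $G$ has coclass $6$, $|G|=p^{11}$. The same list of results shows $G$ has a faithful irreducible character $\chi$ (otherwise the kernel $K$ of every irreducible character has $c(G/K)\le 4$, forcing $c(G)\le 4$); by Lemma \ref{4cods_faithfulchi_pa}, $\cod{\chi}=p^a$, and by Lemma \ref{codorder}, $p^{11}=|G|<p^{2a}$, so $a\ge 6$.

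The technical heart is to bound $a$ from above and to determine the chief factors at the bottom of the upper central series. Since $G/Z_2$ has class $3$, it carries $p^a$ as the codegree of a nonlinear irreducible character, so $p^{a+1}\le|G/Z_2|$. Applying $(\ast)$ to $G$ (valid as $|G|=p^{11}\ge p^{2\cdot 5}$) gives $|Z_2|\ge p^3$, hence $|G/Z_2|\le p^8$ and $a\le 7$. To eliminate $a=7$ I would also apply $(\ast)$ to the class-$4$ quotient $G/Z$ (of order $p^{10}\ge p^{2\cdot 4}$), obtaining a lower bound on $|Z_3|$, and play this against Theorem \ref{tiered class4} \ref{2gen_class4} (which gives $|G:G'|\ge p^3$), against $G'>Z_3$ from \cite{contribution}, Theorem $2.47$, and against the cyclicity of $Z(G/Z_2)$ furnished by a faithful character of $G/Z_2$, to reach a contradiction; this should leave $a=6$. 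Then, exactly as in the coclass-$5$ argument, the codegree-$p^6$ character $\gamma$ of $G/Z_2$ is faithful of degree $p$, so Lemma \ref{faithfulp} gives $\cd{G/Z_2}=\{1,p\}$, and Lemmas \ref{cd 1p} and \ref{1stlem} together with the same structural facts pin down $|Z_2|$, $|G'|$ and $|Z_3|$ and force $Z_3/Z_2=Z(G/Z_2)$ to be cyclic of order $p^2$.

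Finally I would reach the contradiction by the commutator argument of Lemmas \ref{star_class4_coclass4} and \ref{star_class4_coclass5}: with $|Z_2|$ known, assuming $Z_2/Z$ has exponent greater than $p$ forces $|Z|=p$, but then for $x\in Z_2$ with $x^p\notin Z$ we have $[x,g]\in Z$ for every $g\in G$, so $[x^p,g]=[x,g]^p=1$, forcing $x^p\in Z$, a contradiction. Hence $Z_2/Z$ is elementary abelian, and repeating the argument inside $G/Z$ makes $Z_3/Z_2$ elementary abelian, contradicting that it is cyclic of order $p^2$. Thus no minimal counterexample exists, so $c(G)\le 4$, and coclass $6$ then gives $|G|\le p^{10}$.

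The step I expect to be the main obstacle is the elimination of $a=7$: in the lower-coclass lemmas a single use of $(\ast)$ on $G$ settled the value of $a$ at once, whereas with $|G|=p^{11}$ it only yields $a\le 7$, so one must squeeze further by applying $(\ast)$ to $G/Z$ and balancing the constraints $|G:G'|\ge p^3$, $G'>Z_3$ and cyclicity of the relevant central factor against one another; verifying that these are jointly incompatible with $a=7$ is the delicate bookkeeping here. A secondary point to confirm is that, for the value of $|Z_2|$ that emerges, the implication ``$Z_2/Z$ of exponent exceeding $p$ $\Rightarrow$ $|Z|=p$'' still holds, since that is what drives the concluding exponent argument.
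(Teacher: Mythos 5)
Your setup and your elimination of \(a=7\) are fine, but you have misplaced the difficulty: \(a=7\) falls to exactly the computation of the coclass-\(4\) and coclass-\(5\) cases (Hypothesis \((\ast)\) forces \(|Z_2|=p^3\), the codegree-\(p^7\) character of \(G/Z_2\) is then faithful of degree \(p\), Lemmas \ref{faithfulp} and \ref{1stlem} make \(Z_3/Z_2\) cyclic of order at least \(p^2\), and the exponent argument makes it elementary abelian), so no extra "delicate bookkeeping" is needed there. The genuine gap is in your treatment of \(a=6\). You assert that, "exactly as in the coclass-\(5\) argument," the codegree-\(p^6\) character of \(G/Z_2\) is faithful of degree \(p\), and that this pins down \(|Z_2|\), \(|G'|\), \(|Z_3|\) and forces \(Z_3/Z_2\) to be cyclic of order \(p^2\). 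That requires \(|G:Z_2|=p^{a+1}=p^7\), i.e.\ \(|Z_2|=p^4\); but the inequality \(p^{a+1}\le |G:Z_2|\) only gives \(|Z_2|\le p^4\), and the case \(|Z_2|=p^4\) is precisely the one that your argument (and the paper's) eliminates, because the resulting cyclic \(Z_3/Z_2\) of order at least \(p^2\) contradicts the elementary abelian upper central factors. The surviving case is \(|Z_2|=p^3\), where \(|G:Z_2|=p^8\): the codegree-\(p^6\) character of \(G/Z_2\) may have degree \(p^2\), or degree \(p\) with nontrivial kernel, so \(G/Z_2\) need not have a faithful character of degree \(p\), no cyclic central factor is produced, and your concluding exponent argument has nothing to contradict.

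That surviving case is the real content of the lemma and requires ideas not present in your outline. The paper first gets \(|Z|=p\) from \(\chi(1)=p^5\) and \(|G:Z|\ge \chi(1)^2\), so that \(Z_i/Z_{i-1}\) is elementary abelian for \(1\le i\le 5\); it then rules out \(|Z_3|=p^4\) by taking normal subgroups \(N\) with \(Z<N<Z_2\) and applying Hypothesis \((\ast)\) to the quotients \(G/N\) to show every such \(N\) must equal \(G_4\), which would make \(Z_2/Z\) cyclic, a contradiction; finally, with \(|Z_3|\ge p^5\), it takes \(H\lhd G\) with \(Z_2<H<Z_3\) and \(H\ne G_3\), uses the faithful degree-\(p\) character of the class-\(3\) quotient \(G/H\) together with Lemmas \ref{cd 1p} and \ref{1stlem} to force \(|G'|=p^8\), \(|Z_3|=p^5\), and then \(|Z_4|=p^7\), which is incompatible with \(G'\le Z_4\). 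So the final contradiction is an order count on \(Z_4\) versus \(G'\), not the cyclic-versus-elementary-abelian clash you propose, and without some substitute for these two auxiliary quotient arguments your proof does not close.
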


\begin{proof}
Let \(G\) satisfy \((\ast)\) and let \(|G|\) be minimal such that \(G\) has coclass 6 and \(|\cod{G}|=4\). Suppose \(c(G)=c>5\). The nilpotence class of \(G/Z\) is \(c-1>4\), so \(G/Z\) has coclass at most 5 since \(|G|\) was minimal with coclass 6, but this contradicts Theorem \ref{tiered class4} \ref{2gen_class4} and Lemmas \ref{class4_coclass2} - \ref{star_class4_coclass5}. Hence we may assume \(G\) has nilpotence class 5. 

Suppose \(G\) has no faithful irreducible character. If \(K\) is the kernel of an irreducible character of \(G\), then either \(c(G/K)\le 4\), or \(c(G/K)=5\) and \(G/K\) has coclass at most 5, contradicting Theorem \ref{tiered class4} \ref{2gen_class4} and Lemmas \ref{class4_coclass2} - \ref{star_class4_coclass5}. Hence we must have \(c(G/K)\le 4\) for all irreducible characters of \(G\), which contradicts that \(G\) has class 5. Thus \(G\) has a faithful irreducible character \(\chi\), and by Lemma \ref{4cods_faithfulchi_pa}, \(\chi\) has codegree \(p^a\). Since \(G\) has class 5 and coclass 6, we have \(|G|=p^{11}\), and by Lemma  \ref{codorder}, \(a\ge 6\). As \(c(G/Z_2)=3\), we have \(p^a\in\cod{G/Z_2}\), and since \(G\) satisfies \((\ast)\), the order of \(G/Z_2\) is at most \(p^8\), which shows that \(a\le 7\).

Suppose \(a=7\). Then \(|Z_2|=p^3\). Let \(\gamma\in\irr{G/Z_2}\) have codegree \(p^7\) and notice that \(\gamma\) is a faithful character of \(G/Z_2\) with degree \(p\). By Theorem \ref{tiered class4}   \ref{2gen_class4}, \(G'\) has order at most \(p^7\), and \(G'>Z_3\) by \cite{contribution}, Theorem 2.47. Thus Lemmas \ref{faithfulp} and \ref{1stlem} imply that \(|G/Z_2|=p^8=p|G'/Z_2||Z_3/Z_2|\). This shows that \(Z_3/Z_2\) has order at least \(p^2\), and since \(\gamma\) is faithful, \(Z_3/Z_2\) is cyclic. 

Suppose \(Z_2/Z\) has exponent greater than \(p\). Then there exists \(x\in Z_2\) such that \(x^p\notin Z\), and \(g\in G\) such that \([x^p,g]\ne 1\). Note that this also implies \(|Z|=p\). Since \([x,g]\in Z\), we have \([x^p,g]=[x,g]^p=1\), as \(|Z|=p\), a contradiction. Hence \(Z_2/Z\) is elementary abelian. Repeating this argument in the quotient group \(G/Z\), we conclude that \(Z_3/Z_2\) is also elementary abelian, which contradicts that \(Z_3/Z_2\) is cyclic with order at least \(p^2\). Hence \(a=6\). 

Let \(\chi\in\irr{G}\) be faithful with codegree \(p^6\). Then \(\chi(1)=p^5\), so \(|G:Z|\ge \chi(1)^2\) implies \(|Z|=p\). As before, this shows that \(Z_2/Z\) is elementary abelian, and similarly, \(Z_i/Z_{i-1}\) is elementary abelian for \(1\le i\le 5\). Since \(p^6\in\cod{G/Z_2}\), we have \(|G:Z_2|\ge p^7\), so \(|Z_2|=p^3\) or \(p^4\). If \(|Z_2|=p^4\), then \(G/Z_2\) has a faithful character of degree \(p\). As in the previous lemmas, \(|G:G'|\ge p^3\) and \(G'>Z_3\). Hence Lemmas \ref{faithfulp} and \ref{1stlem} imply that \(|G/Z_2|=p^7=p|G'/Z_2||Z_3/Z_2|\). This shows that \(Z_3/Z_2\) has order at least \(p^2\), and since \(\gamma\) is faithful, \(Z_3/Z_2\) is cyclic. This is impossible since \(Z_3/Z_2\) is elementary abelian. Hence \(|Z_2|=p^3\). 

Suppose \(|Z_3|=p^4\). Then \(|Z|=p\), for otherwise \(G/Z\) contradicts Hypothesis \((\ast )\). Let \(N\lhd G\) such that \(Z<N<Z_2\). Put \(Z(G/N)=X/N\) and notice that \(Z_2\le X \le Z_3\). If \(X=Z_2\), then \(Z(G/N)=Z_2/N\) and \(Z_2(G/N)=Z_3/_N\). Since \(G/N\) has class 3 or 4, this contradicts Hypothesis \((\ast )\). Hence \(X=Z_3\). Now \(Z_2(G/N)=Z_4/N\), so \(G/N\) has class 3, which implies \(N\ge G_4\). Since \(G_4>Z\) by Theorem 2.47 of \cite{contribution}, we have \(p^2\le |G_4|\le |N|=p^2\), which implies \(N=G_4\) for all normal subgroups of G lying between \(Z\) and \(Z_2\). This shows that \(Z_2/Z\) is cyclic, contradicting that it is elementary abelian of order \(p^2\). Hence \(|Z_3|\ge p^5\). 

Now let \(H\lhd G\) such that \(Z_2<H<Z_3\) and assume \(H\ne G_3\). Then \(G/N\) has class 3, and \(p^6\in\cod{G/N}\). Since \(|G:N|=p^7\), there exists a faithful character of \(G/N\) with degree \(p\). Put \(Y/N=Z(G/N)\) and notice that \(Y/N\) is cyclic. The center of \(G/N\) lies between \(Z_3\) and \(Z_4\), and since \(Z_4/Z_3\) is elementary abelian, this shows that \(|Y:N|\) is at most \(p^2\) and hence \(|G:Y|\ge p^5\). By  Lemmas \ref{cd 1p} and \ref{1stlem} we have \(|G/N|=p^7=p|G'/N||Y/N|\le p^3|G'/N|\), which forces \(G'\) to have order \(p^8\) and \(Y\) to have order \(p^6\). Since \(Y/N\) is cyclic while \(Z_3/N\) is elementary abelian, \(Z_3\) must have order \(p^5\). Lemma \ref{cd 1p} implies that \(G/N\) has an abelian subgroup of index \(p\), and hence \(G/Z_3\) must also have such a subgroup, so we can apply Lemma \ref{1stlem} to \(G/Z_3\). Now \(p^6=p|G'/Z_3||Z_4/Z_3|=p^4|Z_4/Z_3|\), which shows that \(Z_4\) has order \(p^7\), which is impossible since \(G'\) is contained in \(Z_4\) and has order \(p^8\). Thus \(G\) must have class at most 4. Since \(G\) has coclass \(6\), we have \(n-6=c\le 4\), and hence \(|G|\le p^{10}\).  \end{proof}

\begin{lem}
\label{star_class4_coclass7}
Let \(G\) be a finite \(p\)-group with \(\cod{G}=\{1,p,p^b,p^a\}\) where \(2\le b<a\). If \(G\) has coclass 7, and \(G\) and all of its quotients satisfy Hypothesis \((\ast)\), then the nilpotence class of \(G\) is at most 4 and \(|G|\le p^{11}\).
\end{lem}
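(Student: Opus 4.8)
The plan is to run the minimal-counterexample argument of Lemmas~\ref{class4_coclass2} - \ref{star_class4_coclass6} one coclass higher. Let \(G\) be of minimal order among \(p\)-groups such that \(G\) and all of its quotients satisfy Hypothesis~\((\ast)\), \(\cod{G}=\{1,p,p^b,p^a\}\) with \(2\le b<a\), \(G\) has coclass \(7\), and \(c(G)>4\). First I would reduce to \(c(G)=5\): if \(c(G)>5\), then \(G/Z\) has class greater than \(4\) and, since its order drops, coclass at most \(7\); coclass \(7\) is impossible by minimality of \(|G|\), while coclass at most \(6\) is impossible by Theorem~\ref{tiered class4} \ref{2gen_class4} and Lemmas~\ref{class4_coclass2} - \ref{star_class4_coclass6} (the codegree set stays of size four by Theorem~1.2 of~\cite{codandnil}). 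Then I would show \(G\) has a faithful irreducible character: otherwise, for every kernel \(K\) of an irreducible character, \(G/K\) is a proper quotient, so \(c(G/K)\le 5\), and if \(c(G/K)=5\) then \(G/K\) has coclass at most \(6\) and four codegrees, contradicting the same results; hence \(c(G/K)\le4\) for all such \(K\), and the trivial intersection of the kernels forces \(c(G)\le4\). So \(G\) has a faithful irreducible character \(\chi\), and Lemma~\ref{4cods_faithfulchi_pa} gives \(\cod{\chi}=p^a\). Since \(c(G)=5\) and the coclass is \(7\), \(|G|=p^{12}\), so \(\chi(1)=p^{12-a}\) and Lemma~\ref{codorder} gives \(a\ge7\).

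Next I would bound \(a\) above and fix the bottom of the upper central series. Applying Hypothesis~\((\ast)\) to \(G\) itself (class \(5\), \(|G|=p^{12}\ge p^{10}\)) gives \(|Z_2|\ne p^2\); since \(Z<Z_2\), this forces \(|Z_2|\ge p^3\) and \(|G/Z_2|\le p^9\). Because \(c(G/Z_2)=3\), Theorem~1.2 of~\cite{codandnil} gives \(\cod{G/Z_2}=\cod{G}\), so there is \(\varphi\in\irr{G/Z_2}\) with \(\cod{\varphi}=p^a\). I would verify \(\varphi\) is nonlinear: a linear \(\varphi\) would satisfy \(G'\le\ker{\varphi}\) (using \(Z_2\le Z_3<G'\), Theorem~2.47 of~\cite{contribution}) and \(|G:G'|\ge|G:\ker{\varphi}|=p^a\ge p^7\), which together with \(|Z_2|\ge p^3\) collapses the series --- when \(a=7\) it gives \(\ker{\varphi}=G'\) and hence \(G/G'\), so \(G\), cyclic, and when \(a\ge8\) it forces \(Z_2=Z_3\) --- contradicting that \(G\) is nonabelian of class \(5\). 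So \(\varphi\) is nonlinear, \(p^{a+1}\le\varphi(1)\cod{\varphi}\le|G/Z_2|\le p^9\), and therefore \(a\in\{7,8\}\). (If convenient one can further invoke Theorem~1.1 of~\cite{mepsquared} to obtain \(\cod{G}=\{1,p,p^2,p^a\}\), as in the coclass-\(3\) case.)

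It then remains to eliminate \(a=8\) and \(a=7\), which I expect to be the main work. In each case I would pin down \(|Z_2|\) (it equals \(p^3\) when \(a=8\), since then \(p^9=p^{a+1}\le|G/Z_2|\le p^9\), and \(|Z_2|\in\{p^3,p^4\}\) when \(a=7\), possibly needing one further short split), deduce \(|Z|\le p^2\) from \(|G:Z|\ge\chi(1)^2\), and exhibit a faithful irreducible character of \(G/Z_2\) of degree \(p\) (its degree being forced once \(|G/Z_2|\) and its codegree are known). Lemma~\ref{faithfulp} then gives \(\cd{G/Z_2}=\{1,p\}\), and Lemma~\ref{1stlem}, together with \(|G:G'|\ge p^3\) (Theorem~\ref{tiered class4} \ref{2gen_class4}) and \(G'>Z_3\) (Theorem~2.47 of~\cite{contribution}), pins down \(|G'|\) and \(|Z_3|\) and forces \(Z_3/Z_2\) to be cyclic of order at least \(p^2\). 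Against this: when \(|Z|=p\), the identity \([x^p,g]=[x,g]^p\) shows each \(Z_i/Z_{i-1}\) is elementary abelian (as in Lemmas~\ref{star_class4_coclass4} - \ref{star_class4_coclass6}), contradicting the cyclicity of \(Z_3/Z_2\); and when \(|Z|=p^2\), I would instead pass to the quotients \(G/N\) for normal subgroups \(N\) with \(Z<N<Z_2\), and then with \(Z_2<N<Z_3\), using Hypothesis~\((\ast)\) on \(G/N\) to locate \(Z(G/N)\) and \(Z_2(G/N)\) and Theorem~2.47 of~\cite{contribution} to locate \(G_4\) (respectively \(G_3\)), until a central section known to be elementary abelian of order \(p^2\) is forced to be cyclic. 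Once no counterexample survives, \(c(G)\le4\) together with coclass \(7\) gives \(|G|\le p^{11}\).

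The main obstacle is precisely this last paragraph. Because \(|G|=p^{12}\) is one power of \(p\) larger than in Lemma~\ref{star_class4_coclass6}, the upper central series has an extra layer of slack, so more admissible tuples \((|Z|,|Z_2|,|Z_3|,|Z_4|,|G'|)\) must be ruled out; in particular the case \(|Z|=p^2\) --- where \([x^p,g]=[x,g]^p\) no longer kills the relevant sections unless \(Z\) has exponent \(p\) --- has to be dispatched by the more delicate \(G/N\) arguments. Conceptually this introduces nothing new: the engine remains that a faithful degree-\(p\) character on a class-\(3\) quotient forces that quotient's center to be cyclic, which clashes with the elementary-abelian central sections coming from the commutator identity, so the difficulty is bookkeeping rather than a new device.
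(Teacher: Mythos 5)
Your setup reproduces the paper's opening moves faithfully: the reduction to class $5$ via minimality and the earlier coclass results, the existence of a faithful $\chi$ with $\cod{\chi}=p^a$ by Lemma \ref{4cods_faithfulchi_pa}, $|G|=p^{12}$, the lower bound $a\ge 7$ from Lemma \ref{codorder}, and the upper bound $a\le 8$ from Hypothesis $(\ast)$ applied to $G$ (so $|Z_2|\ge p^3$) together with nonlinearity of a codegree-$p^a$ character of $G/Z_2$. But the statement you flag as ``the main work'' --- eliminating $a=8$ and $a=7$ --- is exactly where the paper's proof lives, and your proposal leaves it as a sketch. Concretely, the paper kills $a=8$ by showing $Z_3/Z_2$ is simultaneously cyclic and elementary abelian, hence $|Z_3|=p^4$, whence Lemma \ref{1stlem} gives $|G'|=p^{10}$, contradicting Theorem \ref{tiered class4} \ref{2gen_class4}; it kills $a=7$ with $|Z_2|=p^4$ by pinning down $|Z_3|=p^7$, $|Z_4|=p^{10}$ and then deriving a contradiction from the order of $\overline{[x,g]}$ in $Z_2/Z$ for a generator $x$ of $Z_3$ over $Z_2$; and in the remaining case $|Z_2|=p^3$ it splits on $|Z_3|\ge p^5$ versus $|Z_3|=p^4$, the former dying via $|G'|=p^9$ and the containment $G'\le Z_4$. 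None of these computations appears in your proposal, and they are not automatic: each depends on the specific arithmetic of $|G|=p^{12}$.

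More importantly, the dichotomy you propose for finishing --- ``$|Z|=p$ is handled by the elementary-abelian-versus-cyclic clash, $|Z|=p^2$ by passing to $G/N$'' --- does not match how the terminal case actually resolves. In the paper's final configuration one has $|Z|=p$, $|Z_2|=p^3$, $|Z_3|=p^4$, so $Z_3/Z_2$ has order $p$ and there is no conflict between cyclicity and being elementary abelian; the commutator identity $[x^p,g]=[x,g]^p$ gives you nothing there. The case is closed only by taking $H\lhd G$ with $Z<H<Z_2$ and $c(G/H)=4$, locating $Z(G/H)=Z_2/H$ and $Z_2(G/H)=Z_3/H$ of order $p^2$, and invoking Hypothesis $(\ast)$ on the quotient $G/H$ of order $p^{10}\ge p^{2\cdot 4}$. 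You mention the $G/N$ device, but you assign it to the wrong branch, so as written your plan would reach the hardest configuration believing it already dispatched. The architecture of your proposal is the paper's, but the proof is in the case analysis you have deferred, and at least one of the closing mechanisms you anticipate would not fire where you expect it to.
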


\begin{proof}
Let \(G\) satisfy \((\ast)\) and let \(|G|\) be minimal such that \(G\) has coclass 7 and \(|\cod{G}|=4\). Similarly to previous lemmas, we may assume \(G\) has class 5 and a faithful irreducible character \(\chi\) with codegree \(p^a\). The order of \(G\) is \(p^{12}\), so \(a\ge 7\) by Lemma \ref{codorder}. As \(G/Z_2\) has order at most \(p^9\), we also have \(a\le 8\). 

Suppose first that \(a=8\), and hence \(|G:Z_2|=p^9\). Then \(G/Z_2\) has a faithful irreducible character of degree \(p\), and by Lemma \ref{faithfulp}, \(\cd{G/Z_2}=\{1,p\}\). Suppose \(Z_2/Z\) has exponent greater than \(p\). Then there exists \(x\in Z_2\) such that \(x^p\notin Z\), and \(g\in G\) such that \([x^p,g]\ne 1\). Note that this also implies \(|Z|=p\). Since \([x,g]\in Z\), we have \([x^p,g]=[x,g]^p=1\), as \(|Z|=p\), a contradiction. Hence \(Z_2/Z\) is elementary abelian. Repeating this argument in the quotient group \(G/Z\), we conclude that \(Z_3/Z_2\) is also elementary abelian, and since it is also cyclic, we have \(|Z_3|=p^4\). By Lemma \ref{1stlem}, \(p^9=p|G'/Z_2||Z_3/Z_2|\), and hence \(|G'|=p^{10}\). This contradicts Theorem \ref{tiered class4} \ref{2gen_class4}, so \(a=8\) is impossible. 

Now \(a=7\) and \(|G:Z_2|=p^8\) or \(p^9\). Suppose \(|G:Z_2|=p^8\). Then \(G/Z_2\) has a faithful irreducible character of degree \(p\), so \(\cd{G/Z_2}={1,p}\) and \(Z_3/Z_2\) is cyclic. The order of \(Z_4\) is at least \(p^7\), for otherwise \(G/Z_2\) would violate Hypothesis \((\ast)\). If \(|Z_3|=p^5\), then by Lemma \ref{1stlem}, \(p^8=p|G'/Z_2||Z_3/Z_2|\), which implies that \(|G'|=p^{10}\), contradicting Theorem \ref{tiered class4}  \ref{2gen_class4}. If \(|Z_3|=p^6\), we have \(|G'|=p^9\). Applying Lemma \ref{1stlem} to \(G/Z_3\), this would imply \(|Z_4|=p^8\), which is impossible since \(Z_4\) contains \(G'\). Thus \(|Z_3|\ge p^7\), so \(|G/Z_2|=p^8=p|Z_3/Z_2||G'/Z_2|\ge p^4|G'/Z_2|\), which implies that \(|G'|\le p^8\). Since \(G'>Z_3\) by Theorem 2.47 of \cite{contribution}, this shows that \(|G'|=p^8\) and \(|Z_3|=p^7\).  Applying Lemma \ref{1stlem} to \(G/Z_3\), we obtain \(|Z_4|=p^{10}\). 

Notice that \(G\) has a faithful irreducible character \(\chi\) with codegree \(p^7\), so \(\chi(1)=p^5\), and hence \(|Z|\le p^2\). Suppose \(|Z|=p\). Put \(Z_3=\langle x,Z_2\rangle\), and let \(\overline{G}=G/Z\). Since \(x^{p^2}\notin Z_2\), there exists some \(g\in G\) such that \(\overline{1}\not\equiv \overline{[x^{p^2},g]}\equiv \overline{[x,g]^{p^2}} \equiv \overline{1}\), since \(\overline{[x,g]}\in Z_2/Z\), which has order \(p^2\). Hence \(|Z|=p\). Suppose \(Z_2/Z\) has exponent greater than \(p\). Then there exists \(y\in Z_2\) such that \(y^2\notin Z\), and \(g\in G\) such that \(1\neq [y^p,g]=[y,g]^p=1\), a contradiction. Hence \(Z_2/Z\) is elementary abelian. Repeating this argument in \(G/Z\), we see that \(Z_3/Z_2\) is also elementary abelian, which is impossible since \(Z_3/Z_2\) is cyclic of order \(p^3\). Thus \(Z_2\) must have order \(p^3\). 

Suppose \(Z_2/Z\) has exponent greater than \(p\). Then there exists \(x\in Z_2\) such that \(x^p\notin Z\), and \(g\in G\) such that \([x^p,g]\ne 1\). Note that this also implies \(|Z|=p\). Since \([x,g]\in Z\), we have \([x^p,g]=[x,g]^p=1\), as \(|Z|=p\), a contradiction. Hence \(Z_2/Z\) is elementary abelian, and similarly, \(Z_i/Z_{i-1}\) is elementary abelian for \(1\le i\le 5\). 

Suppose \(|Z_3|\ge p^5\). Let \(N\lhd G\) such that \(|N|=p^4\), \(Z_2<N<Z_3\), and \(c(G/N)=3\). Put \(X/N=Z(G/N)\), and notice that \(Z_3\le X\le Z_4\). Since \(G/N\) has class 3, \(p^7\in \cod{G/N}\), and hence \(G/N\) has a faithful irreducible character of degree \(p\). Thus \(X/N\) is cyclic, and as \(Z_3/N\) and \(Z_4/Z_3\) are elementary abelian, we have \(|X:N|\le p^2\). By Lemmas \ref{faithfulp} and \ref{1stlem}, \(|G/N|=p^8=p|G'/N||X/N|\le p^3|G'/N|\). Thus \(|G'|=p^9\). If \(|Z_3|=p^5\) or \(p^6\), then by applying Lemma \ref{1stlem} to \(G/Z_3\) we have \(|Z_4|=p^7\) or \(p^8\), which is impossible since \(Z_4\) contains \(G'\). Thus \(|Z_3|\ge p^7\), which implies \(|Z_2:Z_3|\ge p^4\). Since \(|X:N|\le p^2\), and \(X\ge Z_3\), this is impossible. Hence \(|Z_3|=p^4\). 

If \(|Z|=p^2\), then \(G/Z\) would violate Hypothesis \((\ast)\), so we have \(|Z|=p\). Let \(H\lhd G\) such that \(Z<H<Z_2\). and \(c(G/H)=4\). Put \(Z(G/H)=Y/H\) and notice that \(Z_2\le Y\le Z_3\). If \(Y=Z_3\), then \(G/H\) would have class 3, a contradiction, so \(Y=Z_2\). Now \(Z_2(G/H)= Z_3/H\), which contradicts Hypothesis \((\ast)\). Hence \(G\) must have class at most 4. Since \(G\) has coclass \(7\), we have \(n-7=c\le 4\), and hence \(|G|\le p^{11}\). \end{proof}


\begin{thebibliography}{1}
 
 \bibitem{PPO1}Y. Berkovich. \emph{Groups of prime power order, vol. 1} (De Gruyter, 2008).
 
 \bibitem{memax} S. Croome and M. L. Lewis. Character codegrees of maximal class \(p\)-groups. Preprint (2018). \url{https://arxiv.org/abs/1809.07699}. 
 
 \bibitem{mepsquared} S. Croome and M. L. Lewis. \(p\)-groups with \(p^2\) as a codegree. Preprint (2018) \url{https://arxiv.org/abs/1811.03057}. 
 
 \bibitem{codandnil} N. Du and M. L. Lewis. Codegrees and nilpotence class of \(p\)-groups. \emph{J. Group Theory.} \textbf{19} (2016), 561-567.
 
 \bibitem{contribution} P. Hall. A contribution to the theory of groups of prime-power order. \emph{Proceedings of the London Mathematical Society}. \textbf{2} (1934), 29-95. 
 
 \bibitem{thebook} I. M. Isaacs. \emph{Character theory of finite groups} (AMS Chelsea Publishing, 2006).
 
 \bibitem{characterization1} I. M. Isaacs and D. Passman. A characterization of groups in terms of the degrees of their characters. \emph{Pacific J. Math.} \textbf{15} (1965), 977-903. 
 
 
 \end{thebibliography}
\end{document}